\numberwithin{equation}{section}
\theoremstyle{plain}
\newtheorem{theorem}[equation]{Theorem}
\newtheorem{lemma}[equation]{Lemma}
\newtheorem{proposition}[equation]{Proposition}
\newtheorem{corollary}[equation]{Corollary}
\newtheorem{conjecture}[equation]{Conjecture}
\theoremstyle{definition}
\newtheorem{example}[equation]{Example}
\newtheorem{remark}[equation]{Remark}
\newcommand{\cO}{\mathcal{O}}
\newcommand{\cV}{\mathcal{V}}
\newcommand{\cN}{\mathcal{N}}
\newcommand{\cB}{\mathcal{B}}
\newcommand{\cI}{\mathcal{I}}
\newcommand{\setC}{\mathbb{C}}\newcommand{\bC}{\mathbb{C}}
\newcommand{\bQ}{\mathbb{Q}}
\newcommand{\bR}{\mathbb{R}}
\newcommand{\bZ}{\mathbb{Z}}
\newcommand{\bP}{\mathbb{P}}
\def\bH{\mathbb H}
\title{Links of rational singularities, L-spaces and LO fundamental groups}
\author{Andr\'as N\'emethi}
\address{R\'enyi Institute of Mathematics,  1053 Budapest,   Re\'altanoda u. 13--15, Hungary.}
\email{nemethi.andras@renyi.mta.hu}
\thanks{The  author is partially supported by the OTKA Grants 100796 and K112735.}
\date{}
\begin{document}

\maketitle
\pagestyle{myheadings} \markboth{{\normalsize
Andr\'as  N\'emethi}}{ {\normalsize Links of rational singularities, L-spaces and LO fundamental groups}}

\begin{abstract}
We prove that the link of a complex normal surface singularity is an L--space if and only if
the singularity is rational. This via a  result of
Hanselman, J. Rasmussen, S. D. Rasmussen and Watson \cite{HRRW}, proving the conjecture of
Boyer, Gordon and Watson \cite{BGW}, shows that a singularity link is not
rational if and only if
its fundamental group is left--orderable if and only if it admits a 
coorientable taut foliation.
\end{abstract}

\section{Introduction}\label{sec:introduction}

In the present note we wish to connect three areas of mathematics, algebraic geometry
(especially, the theory of local complex normal surface singularities), low dimensional
topology (Heegaard Floer homology and foliations), and
group theory (left--orderable property).
There are well--defined interplays  between them: links of such singularities are oriented
3--manifolds, whose fundamental groups (with minor exceptions) characterize the corresponding
3--manifolds and the topology of the singularity.
We show that certain basic objects (fundamental in
classification procedures in these three rather independent theories) can be identified
in a surprising way. In singularity theory we target the  rational singularities; by definition
they are those germs with vanishing geometric genus. This vanishing (although it is
 analytic in nature) was characterized combinatorially by the plumbing graph of the link
 by Artin and Laufer (graphs satisfying the property are called `rational graphs') \cite{Artin62,Artin66,Laufer72}. In 3--dimensional topology we consider
 the family of L--spaces, introduced by Ozsv\'ath  and Szab\'o, they are characterized by
 the vanishing of the reduced Heegaard Floer homology, and are key fundamental
 objects in recent developments in topology  \cite{OSz4a,OSz4b}. Being a
 rational singularity link,  or an  L--space,  will be compared
 with the left--orderability of the corresponding fundamental groups.

In fact, the link $M$  of a complex normal surface singularity $(X,o)$  is a  special plumbed
3--manifold, plumbed along a connected, negative definite graph. In this note we will be interested only in rational homology sphere 3--manifolds, hence the corresponding
plumbing graphs are trees of $S^2$'s. The connection between singularity theory and topology
imposed  by the link had deep influences in both directions and created several bridges.
One of them is the introduction of the {\it lattice cohomology} $\{\bH^q\}_{q\geq 0}(M)$ of such
3--manifolds by the author \cite{NLC} (see also \cite{NOSZ}). Although
$\bH^*(M)$ is defined combinatorially from the graph, it can be compared with several analytic invariants, e.g with the geometric genus as well. In particular, in \cite{NOSZ,NLC} is proved:
\begin{theorem}\label{th:intr1}
$(X,o)$ is a rational singularity if and only if the reduced lattice cohomology
of its link $M$ satisfies $\bH_{red}^0(M)=0$; or, equivalently,  $\bH_{red}^*(M)=0$.
\end{theorem}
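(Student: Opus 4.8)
The plan is to reformulate both conditions in terms of the weight function $\chi(l)=-\tfrac12(l,l+K)$ on the lattice $L=H_2(\widetilde X,\bZ)$ of a resolution, and then play Artin's numerical criterion for rationality against the combinatorics of Laufer's computation sequences. Recall the shape of lattice cohomology: one builds the cubical complex whose $0$--cells are the points of $L$ (and, over the various spin$^c$ structures, of its cosets $L'$), puts $w_0=\chi$ on $0$--cells and $w_q(\square)=\max$ of $\chi$ over the vertices of a $q$--cube $\square$, and sets $\bH^q(M)=\bigoplus_{n\ge m_\chi}H^q(S_n,\bZ)$ with $S_n=\{w\le n\}$. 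Since $S_n$ equals the whole (contractible) complex for $n\gg0$, the reduction $\bH^0_{red}$ removes exactly one copy of $\bZ$ per level; hence
\[
\bH^0_{red}(M)=0\iff \text{every nonempty }S_n\text{ is connected},\qquad
\bH^*_{red}(M)=0\iff \text{every }S_n\text{ is }\bZ\text{-acyclic},
\]
in every spin$^c$ structure. The elementary identities I will lean on are $\chi(E_v)=1$, $\chi(x+E_v)=\chi(x)+1-(x,E_v)$ and $\chi(a+b)=\chi(a)+\chi(b)-(a,b)$; the second shows that the move $x\mapsto x+E_v$ stays inside $S_{\chi(x)}$ as soon as $(x,E_v)\ge1$.

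Assume first that $(X,o)$ is rational and work with the minimal resolution, so that $E_v^2\le-2$ for all $v$; by Artin's criterion rationality means $\chi(l)\ge1$ for every $l\in L$ with $l>0$, and feeding the splitting $l=l_+-l_-$ into disjoint effective parts into $\chi(a+b)=\chi(a)+\chi(b)-(a,b)$ upgrades this to $\chi(l)\ge1$ for \emph{all} $l\ne0$; thus $S_0=\{0\}$ and $m_\chi=0$. For the higher levels I would show that every $S_n$ is contractible, by induction on the number of vertices: a connected subgraph of a rational graph is again rational (Laufer), and passing between $\Gamma$ and $\Gamma$ minus an end--vertex modifies the sublevel sets by moves that amount to elementary collapses; here rationality is precisely the hypothesis that prevents Laufer's algorithm from stalling at a cube of weight $\le n$ it cannot leave, so one can always retract any cube of $S_n$ towards the origin. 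Equivalently, one runs computation sequences directly to connect every cube of $S_n$ within $S_n$ to its minimal vertex and to kill all positive--degree cohomology. Either way, $\bH^*_{red}(M)=0$.

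Conversely, suppose $(X,o)$ is not rational. Artin's criterion then provides $Z\in L$ with $Z>0$ and $\chi(Z)\le0$, so $Z\in S_0$ and $Z\ne0$. On the other hand $\chi(E_v)=1$ and $\chi(-E_v)=-E_v^2-1\ge1$ show that $0$ is a strict local minimum of $\chi$, so $\{0\}$ is a connected component of $S_0$; since $S_0$ also contains $Z\ne0$, it is disconnected, $\widetilde H^0(S_0)\ne0$ in the canonical spin$^c$ structure, and $\bH^0_{red}(M)\ne0$. Combining the two implications yields the chain ``rational $\Rightarrow\bH^*_{red}(M)=0\Rightarrow\bH^0_{red}(M)=0\Rightarrow$ rational'', which in particular establishes the internal equivalence $\bH^0_{red}(M)=0\iff\bH^*_{red}(M)=0$ asserted in the theorem — a point that is not at all obvious a priori, since it is only through rationality that vanishing of $\bH^0_{red}$ is seen to force vanishing of the higher lattice cohomology.

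The genuine obstacle is the forward direction, and specifically the acyclicity — not merely connectivity — of \emph{every} sublevel set $S_n$ for a rational graph: connectivity of $S_0$ and of the low levels is immediate, but controlling $H^{\ge1}(S_n)$ uniformly in $n$ is what forces one to exploit the full inductive/computation-sequence structure of rational graphs rather than a single--cycle estimate. A lesser point requiring care is the choice of resolution in the converse direction: minimal good resolutions of non--rational links may carry $(-1)$--vertices, so to keep $0$ a strict local minimum of $\chi$ one should pass to the minimal resolution, or else verify the separation of $0$ from $Z$ inside $S_0$ directly.
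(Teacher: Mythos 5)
You should first be aware that the paper contains no proof of Theorem \ref{th:intr1}: it is imported verbatim from \cite{NOSZ,NLC}, so the only fair comparison is with the arguments there. Your outline does follow the same general strategy (Artin's numerical criterion played against Laufer-type computation sequences controlling the sublevel sets $S_n$ of the weight function), but as written it has two genuine gaps, one in each direction.

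The main gap is in the forward direction. The entire content of ``rational $\Rightarrow \bH^*_{red}(M)=0$'' is that \emph{every} sublevel set $S_n$ is contractible, and this must be proved for \emph{every} spin$^c$ structure, i.e.\ for every weight $\chi_k(l)=-\tfrac12(l,l+k)$ with $k$ characteristic, not only for the canonical $\chi$. Your text replaces this by the assertions that removing an end-vertex ``amounts to elementary collapses'' and that ``rationality is precisely the hypothesis that prevents Laufer's algorithm from stalling, so one can always retract any cube of $S_n$ towards the origin''; this is a restatement of the conclusion, not an argument. Moreover Artin's inequality $\chi(l)\geq 1$ for $l>0$ gives no direct control of $\chi_k$ for non-canonical $k$, so your induction would in any case have to be run with the generalized (relative) Laufer sequences of \cite{NLC}, which are exactly the device you omit; connectivity of the $S_n$ is the easy part, and killing $H^{\geq 1}(S_n)$ uniformly in $n$ and $k$ is where the theorem lives. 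You acknowledge this yourself, but acknowledging the obstacle does not remove it.

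The second gap is in the converse. Your separation of $0$ from $Z$ uses $\chi(-E_v)=-E_v^2-1\geq 1$, i.e.\ $e_v\leq -2$ for all $v$, and for a non-rational link this can fail for \emph{every} available plumbing graph: for $\Sigma(2,3,7)$ the minimal good resolution graph has a central vertex with $e_{v_0}=-1$ (and every other negative definite plumbing graph of this manifold again contains a $(-1)$-vertex), so $\chi(-E_{v_0})=0$ and $0$ is not isolated in $S_0$; indeed the component of $0$ already contains $-E_{v_0}$ and $-E_{v_0}-E_a$. Your fallback ``pass to the minimal resolution'' does not rescue this: for non-rational singularities the minimal resolution need not be a good resolution, so it yields no plumbing graph and no cubical model known to compute the topological invariant $\bH^*(M)$, while ``verify the separation of $0$ from $Z$ inside $S_0$ directly'' is precisely the statement left unproved. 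The cited proofs avoid this by analyzing computation sequences and the associated graded roots (where a jump $(l_i,E_{v(i)})\geq 2$ forces $\bH^0_{red}\neq 0$) rather than by exhibiting $0$ as a strict local minimum, and some replacement of this kind is needed before your converse closes.
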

On the other hand, in \cite{NLC} the author formulated the following conjecture
\begin{conjecture}\label{conj:intr1}
The Heegaard Floer homology and the lattice cohomology  of $M$ are isomorphic (up to a shift in degrees):
$$HF^+_{red,even/odd}(-M,\sigma)=\oplus _{q\ even/odd} \ \bH^q_{red}(M,\sigma)[-d(M,\sigma)],$$
where $\sigma\in {\rm Spin}^c(M)$, and $d(M,\sigma)$ are the $d$--invariants of $HF^+(M,\sigma)$.
\end{conjecture}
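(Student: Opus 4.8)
The plan is to prove Conjecture~\ref{conj:intr1} by induction on the number of \emph{bad} vertices of a negative definite plumbing tree $\Gamma$ representing $M$ (a vertex $v$ being bad if its Euler number exceeds $-\delta_v$, where $\delta_v$ is the valency of $v$), comparing step by step the surgery exact triangle in Heegaard Floer homology with a parallel long exact sequence in lattice cohomology. The base of the induction is the class of graphs with at most one bad vertex: there the higher lattice cohomologies $\bH^q_{red}(M,\sigma)$ vanish for $q\geq 1$, and the isomorphism $HF^+_{red}(-M,\sigma)\cong\bH^0_{red}(M,\sigma)[-d(M,\sigma)]$ — together with the identification of the $d$--invariant with the minimal weight of $\bH^0$ through a Riemann--Roch type computation — is known; it is the graded--root reformulation of the Ozsv\'ath--Szab\'o plumbing formula (see~\cite{NLC}). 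So the whole problem is to show that a single inductive step is compatible on the two sides.

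For the step I would fix a bad vertex $v_0$ of $\Gamma$ and, on the topological side, pass to the Ozsv\'ath--Szab\'o surgery exact triangle $\cdots\to HF^+(-M)\to HF^+(-M')\to HF^+(-M'')\to\cdots$ attached to a meridian of the sphere indexed by $v_0$, whose remaining two terms $M',M''$ are rational homology spheres represented by graphs with strictly fewer bad vertices; and, on the combinatorial side, pass to the long exact sequence obtained by filtering the lattice cube complex of $(M,\sigma)$ — that is, $\bZ^s$ ($s$ the number of vertices) equipped with the Riemann--Roch weight function $\chi$ of $\sigma$ — according to the value of the $v_0$--coordinate, the two associated graded pieces being again lattice cube complexes of graphs with fewer bad vertices. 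First I would check that, term by term, these combinatorial complexes compute the lattice cohomologies of the very graphs $\Gamma',\Gamma''$ that occur topologically, so that the induction hypothesis applies to all of them. The crux is then to identify the connecting homomorphism of the combinatorial sequence with that of the Heegaard Floer triangle, $\bZ[U]$--equivariantly and with the grading shift dictated by the $d$--invariants, most naturally by exhibiting both as the map induced by a common knot--theoretic (large--surgery) model for the meridian of $v_0$. A five--lemma would then upgrade agreement on the three outer terms to the desired isomorphism for $M$, and the even/odd splitting survives because the gradings inside a fixed $\bH^q$ differ by even integers.

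The \textbf{main obstacle} is precisely this identification of the two connecting maps: the \emph{naturality} of the conjectured isomorphism. Lattice cohomology is not known to be functorial under the relevant cobordisms compatibly with the Heegaard Floer cobordism maps, so the combinatorial long exact sequence must be built and analyzed by hand and then matched against a topological triangle whose maps are themselves only pinned down up to the surgery formula; equivalently, one is asking a natural spectral sequence from lattice homology to Heegaard Floer homology to degenerate for negative definite plumbings, which forces one to rule out all higher differentials while simultaneously controlling the absolute $\bQ$--grading shifts through the induction. A way to circumvent naturality would be to abandon surgery triangles and compute both invariants directly from the plumbing link $L\subset S^3$: apply the Manolescu--Ozsv\'ath link surgery formula to $L$ — whose hypercube is assembled from the link Floer complexes of sublinks of $L$ (unions of Hopf--linked unknots) and is therefore completely explicit — and prove that, after a change of basis and cancellation of acyclic summands, it becomes the lattice cube complex $(\bZ^s,\chi)$ with its cube differentials. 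This makes the conjecture look structurally inevitable, but the hypercube is enormous, and the reduction to the small lattice complex, with its bookkeeping of ${\rm Spin}^c$ structures, gradings and coefficients, is itself the bulk of the work.

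For the purposes of the present note one needs only the rank--zero instance $HF^+_{red}(-M,\sigma)=0\ \Leftrightarrow\ \bH^*_{red}(M,\sigma)=0$, which combined with Theorem~\ref{th:intr1} already yields the equivalence ``$M$ is an L--space $\iff$ $(X,o)$ is rational''. Establishing this vanishing case requires neither the grading analysis nor the matching of connecting maps described above — it is enough to exploit the very rigid structure of the lattice cohomology of rational graphs together with the Ozsv\'ath--Szab\'o plumbing computation — and is therefore substantially more tractable than the full conjecture.
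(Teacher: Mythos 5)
The statement you are addressing is stated in the paper as a \emph{conjecture}, and the paper does not prove it: the only cases known there are plumbing graphs with at most one bad vertex (Theorem \ref{th:intr3}, from \cite{NLC}) and at most two bad vertices (\cite{OSSz}). Your text, by your own admission, is a programme rather than a proof: the decisive step --- identifying the connecting homomorphism of your combinatorial long exact sequence with the map in the Ozsv\'ath--Szab\'o surgery triangle, $\bZ[U]$--equivariantly and with control of the absolute gradings (equivalently, degeneration of a lattice-to-Floer spectral sequence) --- is exactly the point you flag as open, and nothing in the paper supplies it. So there is a genuine gap, both in your argument and, deliberately, in the paper, which leaves Conjecture \ref{conj:intr1} open and only proves its rank--zero prediction, Theorem \ref{th:intr2}. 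Two smaller points: your working definition of a bad vertex ($e_v>-\delta_v$) is not the one used here (a set of vertices whose decorations can be decreased to make the graph rational, cf.\ \ref{ss:graphs}), and the induction you propose would have to be run with the latter notion for Theorem \ref{th:intr3} to serve as base case.

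Your closing paragraph also misrepresents how the rank--zero instance is actually obtained. The easy direction (rational $\Rightarrow$ L--space) does follow from Theorems \ref{th:intr1} and \ref{th:intr3}, but the hard direction (non--rational $\Rightarrow$ not an L--space) is \emph{not} proved in the paper by exploiting rigidity of lattice cohomology together with the plumbing computation of \cite{OSzP}; it is proved by an entirely different mechanism: the equivalence of the L--space condition with non--left--orderability (and with absence of taut foliations) for graph manifolds \cite{HRRW}, combined with the amalgamation criterion of \cite{CLW} (Theorem \ref{th:CLW}), the homomorphism criterion of \cite{BRW}, and an induction over the number of nodes driven by Laufer's algorithm and the determinant computations of Lemma \ref{lem:2}. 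In other words, the paper circumvents the conjecture rather than proving a piece of it, so your suggestion that the needed vanishing statement is ``substantially more tractable'' by purely lattice-theoretic means is not substantiated --- no such direct argument is known in the paper, and producing one would itself be a nontrivial result. (Your alternative idea of comparing the lattice complex with the Manolescu--Ozsv\'ath link surgery hypercube of the plumbing link is a reasonable direction, but it is not carried out here and would require the full bookkeeping you acknowledge.)
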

In particular, the above conjecture predicts that $HF^+_{red}(M)=0$ (that is, $M$ is an L--space)
if and only if $\bH^*_{red}(M)=0$, which is equivalent with the rationality of the graph
 by Theorem \ref{th:intr1}.

The goal of  the present note is to prove the above  prediction:
\begin{theorem}\label{th:intr2}
A singularity link is an L--space if and only if the singularity is rational.
\end{theorem}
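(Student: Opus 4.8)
\emph{Setup and strategy.} Let $M$ be the link of $(X,o)$ and fix a negative definite tree plumbing graph $\Gamma$ for it; since $M$ is a rational homology sphere, all genus decorations vanish and $|H_1(M)|=|\det\Gamma|$. By Theorem \ref{th:intr1} the assertion of Theorem \ref{th:intr2} is equivalent to: $M$ is an L--space $\iff$ $\bH^0_{red}(M)=0$ $\iff$ $\bH^*_{red}(M)=0$. Were Conjecture \ref{conj:intr1} available this would be immediate; the point is to argue without it. The plan is to isolate the class of \emph{almost rational} (AR) graphs --- those $\Gamma$ that become rational after decreasing a single Euler number, a class containing all rational graphs --- where the conjecture is already established, and then to reduce the general statement to the AR case by an induction along surgery exact triangles.

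\emph{The AR case.} For AR graphs one has $\bH^q_{red}(M)=0$ for all $q\geq 1$, and the graded root computing $\bH^0$ agrees with the Ozsv\'ath--Szab\'o plumbing calculus, so that $HF^+_{red}(-M)\cong \bH^0_{red}(M)$. Hence for AR $\Gamma$: ``$M$ is an L--space'' $\iff$ ``$\bH^0_{red}(M)=0$'' $\iff$ ``$\Gamma$ is rational'' (by Theorem \ref{th:intr1}). Since rational graphs are AR, the implication ``rational $\Rightarrow$ L--space'' of Theorem \ref{th:intr2} follows at once; it can also be obtained, independently of lattice cohomology, by induction on $|\Gamma|$ via the surgery triangle at a leaf vertex, whose other two terms are plumbed manifolds of rational graphs of smaller size (rationality being inherited by the relevant framing change and vertex deletion through the Artin--Laufer criterion), hence L--spaces by induction, the determinants adding up along the triad; the base cases being $S^3$ and lens spaces.

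\emph{The main implication.} It remains to prove ``$M$ is an L--space $\Rightarrow$ $\Gamma$ rational'', contrapositively: if $\Gamma$ is \emph{not} rational then $HF^+_{red}(-M)\neq 0$. If $\Gamma$ is AR this is already the AR case above. Otherwise $\Gamma$ is non-AR (in particular non-rational), and I would argue by induction on a suitable complexity of $\Gamma$ (for instance $|\det\Gamma|$ refined by the number of vertices, or $\sum_v(\delta_v-1)$). The combinatorial heart is a lemma locating, in any non-AR graph, a vertex $v$ at which a surgery or blow-down move strictly lowers the complexity while producing a graph that is \emph{still} non-rational --- this rests on analysing how the Artin--Laufer inequalities degenerate under one--vertex deletions and framing changes, pinning down a ``minimal'' non-rational sub-configuration responsible for the failure of AR-ness. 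One then transports the nonvanishing of $HF^+_{red}$ across the surgery exact triangle at $v$; to control this transport I would use the companion surgery exact sequence for lattice cohomology, matched degree--by--degree with the $HF^+$ triangle, together with the triad estimate $\operatorname{rk}\widehat{HF}(Y_2)\leq \operatorname{rk}\widehat{HF}(Y_1)+\operatorname{rk}\widehat{HF}(Y_3)$ and additivity of $|H_1|$, so that the L--space property is forced to propagate in the direction we need.

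\emph{The main obstacle.} This last reduction is the crux. In a surgery exact triangle vanishing of two terms does not force vanishing of the third, and $M$ being an L--space does not by itself make a neighbouring manifold one; so the delicate issues are (i) choosing the vertex and the precise move (a plain surgery triad, or a blow-up/blow-down sequence) so that the $\widehat{HF}$--rank bookkeeping is tight enough to detect $HF^+_{red}\neq 0$, and (ii) establishing the combinatorial lemma that a non-AR graph always admits such a complexity--decreasing move to another non-rational graph. Granting (i) and (ii) the induction closes, and together with the AR case and the forward implication this proves Theorem \ref{th:intr2}.
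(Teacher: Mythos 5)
Your forward direction (rational $\Rightarrow$ L--space) is essentially the paper's argument: rational graphs have no bad vertices, so Theorem \ref{th:intr3} (the AR/$\leq 1$ bad vertex case of Conjecture \ref{conj:intr1}) combined with Theorem \ref{th:intr1} gives the L--space property; this part is fine. The problem is the converse, which is the actual content of the theorem, and there your proposal has a genuine gap that you yourself flag but do not close. The two ingredients you defer --- (i) transporting non-vanishing of $HF^+_{red}$ across a surgery triangle, and (ii) a complexity-decreasing move from a non-AR graph to a smaller non-rational graph --- are precisely the crux, and (i) in particular is not a technical detail: the surgery exact triangle only yields upper bounds such as $\operatorname{rk}\widehat{HF}(Y_2)\leq \operatorname{rk}\widehat{HF}(Y_1)+\operatorname{rk}\widehat{HF}(Y_3)$, so it can propagate the L--space property (when determinants add), but it cannot by itself force $HF^+_{red}\neq 0$ at the third vertex of a triad; detecting non-L-spaces needs a lower bound, which the triangle does not provide. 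Likewise, a ``companion surgery exact sequence for lattice cohomology, matched degree-by-degree with the $HF^+$ triangle'' is not available: what exists is the spectral sequence of Ozsv\'ath--Stipsicz--Szab\'o, and a degree-by-degree match of the two triangles is essentially Conjecture \ref{conj:intr1} itself, which is exactly what you set out to avoid. So the proposal, as it stands, is circular at the decisive step.

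The paper escapes this impasse by leaving Heegaard Floer theory altogether for the hard implication. It uses the theorem of Hanselman--Rasmussen--Rasmussen--Watson (Theorem \ref{th:intr4}): for graph manifolds, not being an L--space is equivalent to left-orderability of $\pi_1$ (and to admitting a coorientable taut foliation). The claim ``non-rational $\Rightarrow$ not an L--space'' then becomes ``non-rational $\Rightarrow$ $\pi_1(M)$ is LO'', proved by induction on the number of nodes: the base case $m(\Gamma)=1$ uses Theorem \ref{th:intr3} plus Theorem \ref{th:intr4}; in the inductive step one cuts $M$ along a JSJ torus corresponding to a well-chosen edge, Dehn-fills one side along the slope $r=-\det(\Gamma_w\setminus w)/\det(\Gamma_w)$ so that its graph becomes negative semidefinite, hence $b_1>0$ and its fundamental group is LO by Boyer--Rolfsen--Wiest, while the other filled piece is a connected negative definite non-rational graph with fewer nodes (non-rationality checked with Laufer's algorithm), hence LO by induction; the Clay--Lidman--Watson amalgamation theorem then gives LO for $\pi_1(M)$. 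If you want to salvage your purely Floer-theoretic route, you would need an input playing the role of HRRW or of Conjecture \ref{conj:intr1}; without one, steps (i)--(ii) cannot be completed as described.
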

In fact, one direction of the statement is already known. The author introduced the notion of
`bad vertices' of a graph \cite{NOSZ,2E}
(for the definition see \ref{ss:graphs}; it is a  generalization of  a similar definition
of Ozsv\'ath and Szab\'o from \cite{OSzP}). In this way, a graph without bad vertices is rational;
a graph with one bad vertex is a graph,  which becomes rational after a
`(negative) surgery at that  vertex'. In
 particular, the number of bad vertices measures  how far the graph is from being  rational.
Related to Conjecture \ref{conj:intr1} in \cite{NLC} is proved:
\begin{theorem}\label{th:intr3}
If the number of bad vertices of the plumbing graph
is $\leq 1$ then Conjecture \ref{conj:intr1} is true.
\end{theorem}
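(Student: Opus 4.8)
The plan is to establish the conjectured isomorphism in the at-most-one-bad-vertex range by reducing it to two facts that are each accessible in this range: an explicit computation of $HF^+(-M,\sigma)$, and the concentration of lattice cohomology in degree $0$. Both $HF^+_{red}(-M)$ and $\oplus_q\bH^q_{red}(M)$ split over $\sigma\in{\rm Spin}^c(M)$, so I fix $\sigma$ and argue summand by summand. The first input is the combinatorial computation of Ozsv\'ath and Szab\'o \cite{OSzP}, valid exactly when the negative--definite plumbing tree has at most one bad vertex: it presents $HF^+(-M,\sigma)$ as a graded $\mathbb{Z}[U]$--module $\mathbb{H}^+(G,\sigma)$ built from characteristic vectors on the plumbing lattice subject to adjunction relations, with the $d$--invariant realized as a distinguished minimal grading.

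Next I would invoke the comparison of this Ozsv\'ath--Szab\'o module with lattice cohomology. The weight function $\chi$ underlying $\bH^*(M,\sigma)$ is the Riemann--Roch quadratic function on the lattice; under the standard affine correspondence between characteristic vectors and lattice points, its restriction to $0$--cubes is precisely the function whose sublevel sets govern $\mathbb{H}^+(G,\sigma)$. Matching the two filtrations identifies the $\mathbb{Z}[U]$--module $\mathbb{H}^+(G,\sigma)$ with the degree--zero lattice cohomology $\bH^0(M,\sigma)$, the minimum of $\chi$ producing the $d$--invariant; this is the identification carried out in \cite{NOSZ}. Hence $HF^+_{red}(-M,\sigma)\cong\bH^0_{red}(M,\sigma)[-d(M,\sigma)]$, which is exactly the claim of Conjecture \ref{conj:intr1} provided the right--hand side has no contribution from positive cohomological degree.

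It therefore suffices to prove that $\bH^q_{red}(M,\sigma)=0$ for all $q\geq 1$ when the graph has at most one bad vertex; then the even/odd sum on the right of Conjecture \ref{conj:intr1} collapses onto $\bH^0_{red}$ in even degree and the two preceding paragraphs conclude. For this I would use a reduction argument along the good (non--bad) vertices. In every direction spanned by a good vertex the weight function $\chi$ is, by Laufer's computation sequence, strictly convex with connected and acyclic cubical sublevel sets; consequently these directions can be collapsed without changing the cohomology, exhibiting $\bH^*(M,\sigma)$ as the lattice cohomology of a reduced weighted cube complex whose dimension equals the number of bad vertices. With no bad vertex the reduced complex is a single point and one recovers the vanishing $\bH^*_{red}=0$ of the rational case (Theorem \ref{th:intr1}); with a single bad vertex $v_0$ the reduced complex is supported on the $v_0$--axis and is therefore at most $1$--dimensional, so $\bH^q$ vanishes automatically for $q\geq 1$ while $\bH^0$ records the (possibly nontrivial) graded root of the one--variable weight function.

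The main obstacle is the reduction step itself: proving that collapsing the good directions is a genuine isomorphism on lattice cohomology. This requires the detailed convexity and connectivity analysis of the sublevel sets $S_n=\{x:\chi(x)\leq n\}$ along each good vertex, organized as an induction over the vertex set in which one repeatedly retracts $S_n$ onto the subcomplex minimizing $\chi$ in the chosen direction; keeping the Laufer--type monotone paths inside $S_n$ at every stage is the technical heart. A secondary but essential point is the grading bookkeeping, so that the single shift $[-d(M,\sigma)]$ in Conjecture \ref{conj:intr1} survives both the Ozsv\'ath--Szab\'o identification and the reduction, rather than being distributed across several degrees.
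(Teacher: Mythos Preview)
The paper does not prove Theorem~\ref{th:intr3}; it merely quotes it as a result of \cite{NLC} (with the generalization to $\leq 2$ bad vertices credited to \cite{OSSz}). So there is no ``paper's own proof'' to compare against. Your proposal is a faithful outline of how the result is actually established in those references: the Ozsv\'ath--Szab\'o combinatorial model $\mathbb{H}^+(G,\sigma)$ of \cite{OSzP} on the $HF$ side, its identification with $\bH^0$ carried out in \cite{NOSZ}, and then the vanishing of $\bH^q_{red}$ for $q\geq 1$ via the reduction to a lattice of rank equal to the number of bad vertices, which is the content of the Reduction Theorem in \cite{NLC}. You have also identified the genuine technical core, namely the contractibility/deformation--retract argument for the sublevel sets along the good directions.

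One small caution on presentation: the grading shift in Conjecture~\ref{conj:intr1} is not literally by $-d(M,\sigma)$ for $\bH^0$ alone but is absorbed into the way the weight function is normalized in \cite{NLC}; when you write this up you should track the $2\chi$ versus $\chi$ conventions carefully so that the $d$--invariant emerges as stated.
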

This was generalized in \cite{OSSz} for $\leq 2$ bad vertices.

Since the above theorem applies  for rational links,
Theorems \ref{th:intr1} and \ref{th:intr3} imply that
the link of a rational  singularity  is an L--space.

The opposite direction was obstructed by the lack of characterizations of the L--spaces
(at least in some language, which can be reformulated inside of singularity theory).
This obstruction was broken recently by several results in this direction,
whose final form is the main result of Hanselman, J. Rasmussen, S. D. Rasmussen and Watson \cite{HRRW}:

 \begin{theorem}\label{th:intr4}
 If $M$ is a closed, connected orientable graph manifold then the following are equivalent:

 \ \ (i) $M$ is not an L--space;

 \  (ii) $M$ has left--orderable (LO) fundamental group;

 (iii) $M$ admits a $C^0$ coorientable taut foliation.
 \end{theorem}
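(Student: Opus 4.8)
The plan is to prove the three conditions equivalent by reducing to the JSJ structure of the graph manifold $M$ and analyzing each property torus by torus. First I would cut $M$ along its JSJ tori into Seifert fibered pieces $M_1,\dots,M_k$ with torus boundary, so that $M$ is recovered by gluing the $M_i$ along their boundary tori according to the underlying graph. The strategy is then to attach to each of the three theories a combinatorial shadow living on the boundary tori: for a manifold $N$ with torus boundary one records, for each boundary component, a set of boundary slopes that are \emph{detected} by that theory --- the foliation-detected slopes (slopes realized by the boundary trace of a $C^0$ taut foliation on $N$), the order-detected slopes (coming from left-orderings of $\pi_1(N)$ together with their actions on $\mathbb{R}$), and the Floer-detected slopes (the Dehn-filling slopes $\gamma$ for which $N(\gamma)$ fails to be an L-space, read off from the bordered invariant of $N$). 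The theorem will follow once one shows (a) that for each single theory the global property of $M$ holds if and only if the detected-slope data of the pieces are mutually compatible across every gluing torus, and (b) that for Seifert fibered pieces the three detected-slope sets literally coincide.

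The first step is the Seifert base case, where I would compute each of the three slope sets explicitly for a Seifert fibered space over an orbifold with boundary. On the foliation side this is governed by the horizontal/vertical existence criteria of Eisenbud--Hirsch--Neumann and Jankins--Neumann, which realize a prescribed interval of boundary slopes by taut foliations. On the ordering side one uses the description of the left-orderings of a Seifert fiber group and the associated actions on the line and universal circle. On the Floer side one uses the bordered Heegaard Floer invariant, most efficiently in its immersed-curve reformulation, which presents the bordered invariant of a torus-boundary manifold as an immersed multicurve on the punctured boundary torus; for Seifert pieces this multicurve is essentially a collection of straight arcs of fiber slope, and the Floer-detected slopes are exactly the complement of the slopes realized by these arcs. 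The content of the base case is that all three computations return the same interval of slopes, described intrinsically by the Seifert data (the fiber slope together with the orbifold Euler number).

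The second step is the gluing, where for each theory one needs a pairing statement: the assembled $M$ has the property if and only if, at every JSJ torus, the slope sets contributed from the two sides are incompatible in the appropriate sense. For taut foliations this is a Gabai-type gluing-and-splitting theorem together with the slope-matching conditions for combining foliations across a torus. For left-orderability it is an amalgamation statement for orderable groups, gluing orderings along the edge groups of the graph-of-groups decomposition of $\pi_1 M$, in the spirit of Bludov--Glass and Boyer--Clay. For L-spaces it is the bordered Floer pairing theorem: the Heegaard Floer homology of the glued manifold is computed from the paired bordered invariants of the pieces, which in the immersed-curve language becomes the minimal geometric intersection number of the two multicurves after applying the gluing homeomorphism; $M$ is an L-space precisely when this intersection number is as small as possible, which translates into the incompatibility of the Floer-detected slope data. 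The foliation and ordering sides of this picture were already matched for graph manifolds by Boyer--Clay, so the genuinely new ingredient is the L-space side.

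The main obstacle is exactly this L-space direction: showing that the Floer-detected slopes of the Seifert pieces coincide with the foliation and order slopes, and that the immersed-curve intersection count glues correctly to characterize global L-spaces. The subtlety is twofold. One must prove that the immersed-curve invariant of each Seifert piece has the predicted fiber-slope geometry, so that its detected-slope set matches the Jankins--Neumann interval; and one must control the geometric intersection numbers under gluing finely enough to read off the exact rank of $HF$, including the borderline cases where a single exceptional slope decides whether $M$ is an L-space. Once these slope dictionaries are aligned across all three theories and shown to be preserved under the same gluing combinatorics, the equivalence of (i), (ii) and (iii) follows by reading one and the same compatibility condition in three different languages.
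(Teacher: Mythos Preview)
The paper does not prove this theorem at all; it is quoted verbatim as the main result of Hanselman, J.~Rasmussen, S.~D.~Rasmussen and Watson \cite{HRRW}, with the attributions (ii)$\Leftrightarrow$(iii) to Boyer--Clay \cite{BC1} and (iii)$\Rightarrow$(i) to \cite{BC2} recorded immediately afterward. In this paper Theorem~\ref{th:intr4} functions purely as an imported black box, used in the proof of Theorem~\ref{th:intr2}. So there is nothing in the paper to compare your argument against.

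As for the proposal itself: what you have written is a reasonable high-level caricature of the slope-detection strategy that underlies \cite{HRRW} and the companion work of Boyer--Clay and Rasmussen--Rasmussen --- introduce foliation-detected, order-detected and NLS-detected slope sets on the JSJ boundary tori, show they agree on Seifert pieces, and prove that each of the three global properties amounts to the same gluing-compatibility condition. But it is a plan, not a proof. Each of your ``pairing statements'' is itself a substantial theorem; on the Heegaard Floer side the gluing step requires the full bordered machinery and a careful distinction between detected and strongly detected slopes to handle the borderline fillings, and the immersed-curve description you invoke is in fact a later reformulation rather than the tool used in \cite{HRRW}. You have correctly located where the work lies, but none of that work is actually carried out here; what you have written would serve as an introduction or road map to \cite{HRRW}, not as a self-contained proof.
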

 Recall that a group $G$ is left-orderable if there exists a strict total ordering $<$
 of $G$ such that $g<h$ implies $fg<fh$ for all $f,g,h\in G$. (By convention, the trivial group is not LO.)

The equivalence (ii)$\Leftrightarrow$(iii) was established by Boyer and Clay \cite{BC1},
(iii)$\Rightarrow$ (i) by Boyer and Clay in \cite{BC2}. The equivalence (i)$\Leftrightarrow$(ii)
was conjectured by Boyer, Gordon and Watson \cite{BGW}.
The above Theorem \ref{th:intr4} was the final answer to this conjecture.
For the history and partial
contributions see the introduction and references from \cite{HRRW} (and the references therein).
The needed material will be reviewed in \ref{ss:cutting}.

This allows us to reformulate the  remaining  implication of  Theorem \ref{th:intr2} as follows:
if $M$ is the link of a non--rational singularity then $\pi_1(M)$ is LO, hence not an L--space.

In the proof of this statement the following facts will be crucial:

(A) The characterization of the rational graphs via Laufer's algorithm (Laufer's
computation sequence), and also the graph--combinatorics  of bad vertices;

(B) A theorem of Boyer, Rolfsen and Wiest  \cite{BRW}, which states that for compact, irreducible,
$\bP^2$--irreducible 3--manifold $M$, $\pi_1(M)$ is LO if and only if there exists a non--trivial
homomorphism $\pi_1(M)\to L$, where $L$ is any LO group. In particular, since $\bZ^r$ is LO for any
$r\in\bZ_{>0}$, if $H_1(M,\bQ)\not=0$ then using the abelianization map we obtain that
$\pi_1(M)$ is LO.

(C) A theorem of Clay, Lidman and Watson \cite{CLW} regarding the behaviour of LO property
with respect to free product with amalgamation
 (more precisely, with respect to
 the decomposition of $M$ along a torus and closing the pieces along `LO--slopes'), cf. \ref{ss:cutting}.

(D) The equivalences (i)$\Leftrightarrow$ (ii) from Theorem \ref{th:intr3} above
(at the `initial step' combined with Theorem \ref{th:intr2}).

\vspace{2mm}

Hence Theorems \ref{th:intr2} and \ref{th:intr4} combined provide:

 \begin{theorem}\label{th:intr5}
 If $M$ is the link of a normal surface singularity (that is, if $M$ is the plumbed  manifold
 associated with a connected, negative definite graph),  then the following are equivalent:

 \ \ (i) $M$ is the link of a non--rational singularity (i.e., the graph is not a `rational graph');

 \  (ii) $M$ is not an L--space;

   (iii) $M$ has left--orderable (LO) fundamental group;

 (iv) $M$ admits a $C^0$ coorientable taut foliation.
 \end{theorem}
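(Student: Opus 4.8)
The plan is as follows. Since the plumbed manifold of a connected negative definite graph is a graph manifold, Theorem~\ref{th:intr4} already gives the equivalences (ii)$\Leftrightarrow$(iii)$\Leftrightarrow$(iv), so the whole statement reduces to (i)$\Leftrightarrow$(ii), that is, to Theorem~\ref{th:intr2}. One implication was recalled above: if $\Gamma$ is rational then $\bH^*_{red}(M)=0$ by Theorem~\ref{th:intr1}, Conjecture~\ref{conj:intr1} holds for $\Gamma$ by Theorem~\ref{th:intr3} (which applies to rational graphs), hence $HF^+_{red}(M)=0$ and $M$ is an L--space; this is (ii)$\Rightarrow$(i). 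So the entire content is (i)$\Rightarrow$(ii), and by Theorem~\ref{th:intr4} it is equivalent to the statement: \emph{if the connected negative definite tree $\Gamma$ is not a rational graph, then $\pi_1(M)$ is left--orderable.}

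I would prove this by induction on the number $b(\Gamma)$ of bad vertices of $\Gamma$, with the number of vertices as a secondary, tie--breaking parameter, and (since blowing down $(-1)$--vertices changes nothing) may assume that $\Gamma$ is a minimal good resolution graph. For the base case $b(\Gamma)\le 1$: here $\Gamma$ is covered by Theorem~\ref{th:intr3}, so non--rationality forces $\bH^*_{red}(M)\ne 0$ by Theorem~\ref{th:intr1}, hence $HF^+_{red}(M)\ne 0$, so $M$ is not an L--space, and Theorem~\ref{th:intr4} then shows that $\pi_1(M)$ is LO; this is point (D). (One could equally start the induction at $b(\Gamma)\le 2$ using \cite{OSSz}.)

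For the inductive step, assume $b(\Gamma)\ge 2$. Using Laufer's computation sequence for the minimal cycle and the combinatorics of bad vertices (point (A)), I would pick an edge $e=(v_1,v_2)$ of $\Gamma$, with associated torus $T\subset M$, cutting $M$ into $M_1\cup_T M_2$ and $\Gamma$ into subtrees $\Gamma_1\ni v_1$ and $\Gamma_2\ni v_2$, chosen so that: (a) the failure of rationality detected by Laufer's algorithm takes place strictly inside $\Gamma_2$, so that the closed manifold $M_2(\alpha)$, whose graph is $\Gamma_2$ with a short chain attached at $v_2$, is the link of a non--rational singularity with strictly fewer bad vertices than $\Gamma$ (or fewer vertices) for \emph{every} slope $\alpha$ on $T$ in a neighbourhood of a suitable slope $\alpha_0$ -- by the inductive hypothesis each such $M_2(\alpha)$ is not an L--space, so $M_2$ detects a nonempty interval of slopes on $T$ in the sense of Clay--Lidman--Watson; and (b) $v_1$ is a bad vertex of $\Gamma$ and $\Gamma_1$ is (or, after a subsidiary decomposition, reduces to) a star whose central node $v_1$ has ``fat'' Seifert data precisely because it is bad, which likewise makes $M_1$ detect a nonempty interval of slopes on $T$. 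Both $M_1$ and $M_2$ have nonempty torus boundary, hence $b_1\ge 1$, and are irreducible and $\bP^2$--irreducible, so by point (B) their fundamental groups are LO. The purpose of the choice of $e$ is that, after matching the framings on $T$ coming from the plumbing, the two intervals of detected slopes overlap in the configuration demanded by point (C); applying (C) to $\pi_1(M)=\pi_1(M_1)*_{\pi_1(T)}\pi_1(M_2)$ then yields that $\pi_1(M)$ is LO, closing the induction.

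The step I expect to be the main obstacle is exactly this slope bookkeeping in the inductive step. One has to convert the purely combinatorial information ``$\Gamma$ is non--rational, the failure localised near a prescribed bad vertex'', read off from Laufer's algorithm, into the geometric assertions that a genuine interval of boundary slopes on $T$ consists of non--L--space (equivalently LO) slopes for each of $M_1$ and $M_2$, and then verify that, after matching the plumbing framings on $T$, these two intervals meet as (C) requires; in particular one must control how truncating a leg or a branch near a bad vertex affects both Laufer's algorithm (so as to keep non--rationality while strictly dropping the bad--vertex count, taking care of the extra valence created at $v_1$ and $v_2$) and the Seifert/graph structure of the two pieces. Finally, the degenerate configurations -- $\Gamma_1$ a short star with few exceptional fibres, $v_1$ of small valence in $\Gamma$ with a neighbouring node, or the capped graph $\widehat\Gamma_2$ turning out to be rational -- will have to be dispatched separately, in the last case by replacing the inductive hypothesis with a direct argument combining point (B) with the known classification of L--space Seifert fibered spaces.
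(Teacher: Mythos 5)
Your reduction of Theorem~\ref{th:intr5} to Theorem~\ref{th:intr2} via Theorem~\ref{th:intr4}, the handling of the easy direction through Theorems~\ref{th:intr1} and \ref{th:intr3}, and the base case $m(\Gamma)\leq 1$ all match the paper. The overall shape of your inductive step (cut along a JSJ torus coming from an edge, apply the Clay--Lidman--Watson amalgamation theorem, use Boyer--Rolfsen--Wiest on one side and induction on the other) is also the paper's shape. But the heart of the argument --- actually producing a matching pair of LO slopes --- is exactly the step you leave open and flag as ``the main obstacle,'' and the route you sketch for it is both unsubstantiated and not what is needed. Theorem~\ref{th:CLW} requires only a \emph{single} slope $\alpha$ with $\pi_1(M_w(\alpha))$ and $\pi_1(M_v(\phi_*\alpha))$ both LO; you instead invoke overlapping \emph{intervals} of detected slopes, claim that $M_2(\alpha)$ is non-rational with fewer bad vertices ``for every slope $\alpha$ in a neighbourhood'' of some $\alpha_0$, and that a bad vertex gives $M_1$ ``fat Seifert data'' detecting an interval --- none of this is proved, and there is no reason the complementary filled graph stays negative definite, connected and non-rational for a whole interval of slopes, nor is badness of $v_1$ by itself a detection statement. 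Also, your observation that the bounded pieces $M_1,M_2$ have $b_1\geq 1$ and hence LO $\pi_1$ is beside the point: CLW needs LO-ness of the \emph{fillings}, not of the pieces.

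What closes this gap in the paper is a simple determinant mechanism (Lemmas~\ref{lem:det} and \ref{lem:2}): choose the edge $e=(v,w)$ so that $\Gamma_w$ contains a node while the non-rationality, localized via Laufer's algorithm on $\Gamma^{\downarrow}$ (the different branches $\Gamma_i(v)^{\downarrow}$ do not interact in the computation sequence), sits in another branch; then fill $M_w$ along the slope $r=-\det(\Gamma_w\setminus w)/\det(\Gamma_w)$, so that $\det(\Gamma_w(r))=0$, i.e.\ $M_w(\alpha)$ has $b_1\geq 1$ and is LO by Boyer--Rolfsen--Wiest, while the complementary filling $\Gamma_v(1/r)$ is automatically negative definite, still non-rational (it dominates the non-rational $\Gamma_i(v)^{\downarrow}$ by subgraph/decoration monotonicity), and has strictly fewer \emph{nodes} --- which is why the paper inducts on the number of nodes (with the $m(\Gamma)=1$ case of Theorem~\ref{th:intr3} as the start), rather than on the bad-vertex count, whose strict decrease would be hard to guarantee. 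Finally, the degenerate situation where no vertex separates two node-containing components (exactly two adjacent nodes) is handled by blowing up the connecting edge and rerunning the argument at the new $(-1)$-vertex; your proposal would need an explicit treatment of this case as well.
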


Note that an integral homology sphere $M$ is a rational link if and only if
$M=S^3$ or $M=\Sigma(2,3,5)$, the link of the Brieskorn $E_8$--singularity
$\{x^2+y^3+z^5=0\}$, see e.g.  \cite{Five}.
In particular, if $M$ is an integral homology sphere singularity link,
not of type $S^3$ or $\Sigma(2,3,5)$, then (ii)-(iii)-(iv) above  are automatically satisfied.
(For left-orderability of $\pi_1(M)$, whenever $M$ is a integral homology 3-sphere, irreducible and toroidal graph manifold, see \cite{CLW}.)

 \begin{remark}{\bf (An alternative proof. The existence of taut foliations)}
Although  the equivalence of (i) with (iv) follows from Theorem  \ref{th:intr4}, we note that
an alternative proof of the main theorem can be done by inductive construction of the taut foliation
on the corresponding links (instead of proving the LO property of their fundamental group).
More precisely, instead to use the equivalence (i)$\Leftrightarrow$(ii) of Theorem \ref{th:intr4}
and the results (B) and (C) listed above regarding LO--properties, one can run a very similar
proof,  based on the very same inductive steps.  It  starts with the equivalence (i)$\Leftrightarrow$(iv),
part (C) will be replaced by the gluing procedure of the foliations, while
 part (C) (theorem of Boyer, Rolfsen and Wiest  \cite{BRW}) will be replaced by an
extension of a result of Eisenbud, Hirsch and Neumann \cite{EHN}, which guarantees
the existence of a transversal  cooriented foliation whenever the plumbing graph is negative semidefinite.
(The  Eisenbud, Hirsch and Neumann case is valid
 for Seifert fibered manifolds). The details are given in  \ref{ss:3.3}--\ref{end}.
 Particular cases  already  appeared in   \cite{M}.
\end{remark}

\bekezd {\bf Applications.}  Several results valid from  singularity theory can be reinterpreted
via the above correspondence in terms of L--spaces.
E.g., since rational graphs are stable with respect to taking subgraphs, or decreasing the decorations of the vertices, we obtain:

\begin{corollary}\label{cor:intr1}
Negative definite
plumbing graphs of plumbed L--spaces are stable with respect to taking subgraphs, or decreasing the decorations of the vertices.
\end{corollary}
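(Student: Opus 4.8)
\textbf{Proof proposal for Corollary \ref{cor:intr1}.}
The plan is to reduce the statement about plumbed L--spaces entirely to the combinatorial stability of rational graphs, using Theorem \ref{th:intr5}. Indeed, by the equivalence (i)$\Leftrightarrow$(ii) there, a negative definite plumbing tree $\Gamma$ of $S^2$'s produces an L--space if and only if $\Gamma$ is a rational graph in the sense of Artin--Laufer. So the corollary will follow once we verify the two monotonicity properties purely at the level of rational graphs: (a) every full subgraph $\Gamma'\subset\Gamma$ of a rational graph is again rational; (b) if $\Gamma'$ is obtained from a rational graph $\Gamma$ by decreasing one (hence any) Euler number $e_v\mapsto e_v'\le e_v$, then $\Gamma'$ is rational. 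One should note that decreasing decorations keeps the graph negative definite, and deleting vertices keeps it negative definite as well (a full sublattice of a negative definite lattice is negative definite), so in both cases the hypothesis of Theorem \ref{th:intr5} remains satisfied and the reduction is legitimate.

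First I would recall Artin's characterization: $\Gamma$ is rational if and only if for the minimal (anti--nef) cycle $Z_{min}$ — equivalently, running Laufer's computation sequence — one has $\chi(Z)\ge 1$ for all effective cycles $Z$, equivalently $p_a(Z_{min})\le 0$ where $\chi(Z)=-\tfrac12(Z,Z+K)$. The cleanest route is Laufer's criterion: $\Gamma$ is rational iff, starting from $Z=E_v$ for some vertex and repeatedly adding a base element $E_{v_i}$ whenever $(Z,E_{v_i})>0$, one never encounters a step where $(Z,E_{v_i})\ge 2$ (so the Laufer algorithm terminates at the fundamental cycle with $\chi=1$). Both (a) and (b) are then handled by observing that passing to a subgraph or decreasing a decoration can only \emph{decrease} the relevant intersection numbers $(Z,E_{v_i})$ along the corresponding computation sequence, so the Laufer inequality $(Z,E_{v_i})\le 1$ is preserved; hence the new graph is again rational. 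This is exactly fact (A) invoked in the introduction, and I would cite Laufer \cite{Laufer72} (and Artin \cite{Artin62,Artin66}) for the combinatorial characterization used.

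In slightly more detail for (a): a computation sequence for $\Gamma'$ can be lifted to $\Gamma$ (add the same vertices, in the same order), and at each step the intersection number computed in $\Gamma'$ equals the intersection number in $\Gamma$ minus the contribution of the (nonnegative) coefficients along edges leaving $\Gamma'$; since that correction is $\ge 0$, the $\Gamma$--value is $\ge$ the $\Gamma'$--value, so the Laufer inequality valid in $\Gamma$ forces it in $\Gamma'$. For (b): if we decrease $e_v$, then for any cycle $Z$ with $Z$--multiplicity $m_v$ at $v$ we have $(Z,E_v)_{\Gamma'}=(Z,E_v)_{\Gamma}+m_v(e_v-e_v')\ge (Z,E_v)_\Gamma$ is \emph{not} what we want — rather one checks directly that along the Laufer sequence for $\Gamma'$ the test value at $v$ can only be smaller than or equal to the $\Gamma$ value because $Z$ has not yet "seen" $v$ with large multiplicity; a clean way is to use the equivalent characterization "$\chi(Z)\ge1$ for all $Z>0$" and note $\chi_{\Gamma'}(Z)=\chi_\Gamma(Z)+\tfrac12 m_v(m_v+1)(e_v-e_v')\ge\chi_\Gamma(Z)\ge 1$ since $e_v-e_v'\ge 0$ — wait, with our sign conventions $\chi(Z)=-\tfrac12(Z,Z+K)$ and $(E_v,E_v)=e_v$, so decreasing $e_v$ \emph{increases} $\chi(Z)$, giving $\chi_{\Gamma'}(Z)\ge\chi_\Gamma(Z)\ge 1$ immediately. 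That settles (b) with a one--line inequality.

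\textbf{Main obstacle.} The genuinely new content of the corollary is entirely carried by Theorem \ref{th:intr5} (the hard equivalence (i)$\Leftrightarrow$(ii)); given that, the remaining work is the elementary lattice--theoretic monotonicity of rationality, where the only thing to be careful about is keeping track of sign conventions for $\chi$ and $(E_v,E_v)$ and ensuring that one is allowed to apply Theorem \ref{th:intr5} after the modification (i.e.\ negative definiteness is preserved, which it is in both operations). So I do not anticipate a serious obstacle; the proof is a short deduction plus a standard check with Laufer's criterion.
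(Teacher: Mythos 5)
Your proposal is correct and follows essentially the same route as the paper: the corollary is deduced from the equivalence ``L--space $\Leftrightarrow$ rational graph'' (Theorem \ref{th:intr2}, equivalently (i)$\Leftrightarrow$(ii) of Theorem \ref{th:intr5}) together with the stability of rationality under taking subgraphs and decreasing decorations, which the paper records as facts (a)--(b) in \ref{ss:graphs} via Artin's criterion and Laufer's algorithm. Only a cosmetic slip: in the $\chi$--comparison for (b) the correction term is $\tfrac12 m_v(m_v-1)(e_v-e'_v)$ rather than $\tfrac12 m_v(m_v+1)(e_v-e'_v)$, which does not affect the sign or the conclusion.
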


Using stability with respect to finite coverings we obtain the following 
(for the proof see \ref{proofcor2}). 

\begin{corollary}\label{cor2}
Assume that we have a finite covering $M_1\to M_2$
of graph 3--manifolds associated with
connected negative definite plumbing graphs. The covering is either unbranched,
or it is branched  with branch locus 
$B_2\subset M_2$. In the second case we assume that $M_2$ admits a 
negative definite plumbing representation,
such that all the connected components of $B_2$ are represented by
(generic) $S^1$--fibers of certain JSJ Seifert components of the plumbing.   
 Then $M_2$ is an L--space whenever $M_1$ is an L--space.  
\end{corollary}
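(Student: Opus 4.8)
\emph{Proof idea.} The plan is to translate both the hypothesis and the conclusion into singularity theory via Theorem \ref{th:intr5}, and then to use that rationality of a normal surface germ descends along finite morphisms. By Theorem \ref{th:intr5} the property ``$M_i$ is an L--space'' depends only on the negative definite plumbing graph of $M_i$ (it is equivalent to the rationality of \emph{some}, equivalently \emph{every}, normal surface singularity with that graph), so I am free to choose the analytic structures. I would fix an analytic germ $(X_2,o_2)$ realising the plumbing representation of $M_2$ singled out in the statement; it then suffices to produce an analytic germ $(X_1,o_1)$ with link $M_1$ together with a finite morphism $\phi\colon(X_1,o_1)\to(X_2,o_2)$ inducing the prescribed covering, and to show that $(X_1,o_1)$ rational forces $(X_2,o_2)$ rational.

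First I would realise the covering analytically. Take $X_2$ to be a small Stein representative, so that $X_2\setminus\{o_2\}$ is smooth and deformation retracts onto $M_2$. In the unbranched case the covering $M_1\to M_2$ corresponds to a subgroup of $\pi_1(X_2\setminus\{o_2\})$, hence to a connected finite \'etale cover $U_1\to X_2\setminus\{o_2\}$; extending it across $o_2$ (Grauert--Remmert), i.e. normalising $X_2$ in $\bC(U_1)$, yields a normal surface with a finite morphism $\phi\colon(X_1,o_1)\to(X_2,o_2)$ which is \'etale away from $o_2$, has a single point over $o_2$ because $M_1$ is connected, and has link $M_1$. In the branched case the hypothesis is precisely what makes the branch set complex analytic: each component of $B_2$, being a generic $S^1$--fibre of a JSJ Seifert piece of the chosen plumbing, is the link of a curvetta through a smooth point of the corresponding exceptional curve, so $B_2$ is the link of a reduced curve germ $(C_2,o_2)\subset(X_2,o_2)$; applying the same extension procedure to the cover of $X_2\setminus C_2$ produces a finite $\phi\colon(X_1,o_1)\to(X_2,o_2)$, now ramified along $C_2$, again with link $M_1$.

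The core of the argument is the descent of rationality along $\phi$. In characteristic $0$ the morphism $\phi$ is generically \'etale, so $\tfrac{1}{\deg\phi}\,\tr\colon\phi_*\cO_{X_1}\to\cO_{X_2}$ splits the structural inclusion $\cO_{X_2}\hookrightarrow\phi_*\cO_{X_1}$, exhibiting $\cO_{X_2}$ as a direct summand of $\phi_*\cO_{X_1}$ as an $\cO_{X_2}$--module. By Boutot's theorem a direct summand of a normal germ with rational singularities again has rational singularities, so $(X_1,o_1)$ rational implies $(X_2,o_2)$ rational; for surfaces one may instead deduce $p_g(X_2)\le p_g(X_1)$ directly from this splitting via the Leray spectral sequence of a resolution dominating those of $X_1$ and $X_2$. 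Combining with Theorem \ref{th:intr5}: $M_1$ an L--space $\Rightarrow$ $(X_1,o_1)$ rational $\Rightarrow$ $(X_2,o_2)$ rational $\Rightarrow$ $M_2$ an L--space, as claimed.

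I expect the analytic realisation of the second paragraph to be the main obstacle: one must check that the prescribed topological (branched) covering of $3$--manifolds is genuinely induced by a finite morphism of normal surface germs whose total space has link exactly $M_1$ and a single point over $o_2$. The unbranched case reduces to routine normalisation in a finite field extension; the branched case is where the hypothesis on $B_2$ is used essentially, ensuring the branch set is the link of a genuine complex curvetta so that the branched cover is again analytic. The remaining points --- matching the construction over $X_2\setminus\{o_2\}$ with the behaviour near $o_2$, and deducing a single germ upstairs from connectedness of $M_1$ --- are then straightforward, and the descent of rationality itself is purely formal.
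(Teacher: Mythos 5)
Your argument is correct and is essentially the paper's own proof: the paper likewise realises $(M_2,B_2)$ as the link of a pair $(X,D,o)$ (the hypothesis on $B_2$ providing the curve germ $D$), invokes Stein/Grauert--Remmert to produce the covering germ $(Y,o)\to(X,o)$ inducing $M_1\to M_2$, and then cites the descent of rationality under finite morphisms (Koll\'ar--Mori, Prop.\ 5.13), which is exactly the trace-splitting/Boutot argument you spell out. The only difference is that you prove the descent step rather than cite it.
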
 
One can find easily 
(even non--branched)  coverings  when $M_2$ is an L--space but $M_1$ is not.

\begin{example} {\bf (Coverings)} \label{cor:intr2}
Let $K\subset S^3$ be an  embedded algebraic link
(the link of an isolated  plane curve singularity). The cyclic $\bZ_n$ covering of
 $S^3$ branched along $K$ is an L--space if and only if

 $\bullet$ \ $n=2$ and $K$ is the link of an A-D-E (simple) plane curve singularity, or

 $\bullet$ \ $n>3$ and $K$ is the torus link $T_{2,m}$ with $\frac{1}{m}+\frac{1}{n}>\frac{1}{2}$.
 \end{example}
For the proof of the statement see \ref{be:cor}.
The short proofs of \ref{cor:intr1}-\ref{cor2}-\ref{cor:intr2}
 can be compared with sometimes long computations involving
Heegaard Floer homology or the arithmetics of foliations.

\begin{example} {\bf (The Seifert fibered case)}  The link of a weighted
homogeneous normal surface singularity is a Seifert 3--manifold. In \cite{pinkham}
Pinkham computed the geometric genus for such singularity  in terms of the
Seifert invariants in the case when  the link is a rational homology sphere. 
The  vanishing of the corresponding expression provides a numerical rationality criterion 
in terms of Seifert invariant. Hence, the main result provides a new criterion for the topological 
 properties (ii)-(iii)-(iv) from Theorem \ref{th:intr5}.
Here is this  new numerical criterion.   

Assume that the star--shaped graph
has $\nu\geq 3$ legs, the central vertex  $v_0$ is decorated by $e_0$, and the $i$-th leg by
$-b_{i1}, \ldots, -b_{is_i}$, where $[b_{i1},\ldots, b_{is_i}]=b_{i1}-\frac{1}{b_{i2}-\cdots}=\alpha_i/\omega_i$
is the (Hirzebruch) continued fraction with $b_{ij}\geq 2$. The positive integers $\{(\alpha_i,\omega_i)\}_{i=1}^\nu$ are the Seifert invariants with $0<\omega_i<\alpha_i$,
${\rm gcd}(\alpha_i,\omega_i)=1$. ($v_0$ is connected to the vertices 
decorated by $-b_{i1}$.) 
We assume that the graph is negative definite, that is,  $e:=e_0+\sum_i\omega_i/\alpha_i<0$.
Then, by \cite{pinkham}, $M$ is {\it non}--rational if and only if 
\begin{equation}\label{eq:1}
\sum_i \lfloor -l \omega_i/\alpha_i \rfloor \leq le_0 -2 \ \ \ \mbox{for at least
 one $l\in \bZ_{\geq 0}$.}
\end{equation}
 This looks very different than the previous criterions used in topology, e.g. for the existence of foliations (results of 
Eisenbud, Hirsch, Jankins, Neumann, Naimi \cite{EHN,JN,Na}, here we follow 
\cite{LM}).  Let us recall it  for $\nu=3$.
 
Following \cite{LM}  we say that $(x,y,z)\in (\bQ\cap (0,1))^3$ is {\it realizable}
if  there exist coprime integers $m>a>0$ such that up to a permutation of $x,y,z $ one has 
$x<a/m$, $ y<(m-a)/m$,  $ z<1/m$. 

Then $M(\Gamma)$ admits a coorientable transversal foliation 
if and only  if one of the following holds:
\begin{equation}\label{eq:2}  \begin{array}{l}
\mbox{  $e_0=-1$ \ and \ $\{\beta_i/\alpha_i\}_{i=1,2,3}$ is realizable;}\\
\mbox{  $e_0=-2$ \ and \ $\{(\alpha_i-\beta_i)/\alpha_i\}_{i=1,2,3}$ is realizable.}
\end{array} \end{equation}
A direct arithmetical proof of the equivalence of these two criterions will be proved 
in another note. 
\end{example}

\section{Preliminaries}\label{s:prel}

\subsection{The combinatorics of a graph.} \label{ss:graphs}

For more details regarding this subsection see \cite{Five}.

Let $\Gamma$ be a connected negative definite plumbing graph. Since both L--spaces and
rational singularity links are rational homology spheres, without loss of generality we can
assume that  our plumbing graphs are trees and all genus decorations are zero.
Let $\cV$ denote the set of vertices,  $e_v$  the  `Euler' decoration of $v\in\cV$, and
 $\delta_v$ the  valency of $v$ in $\Gamma$. The vertex $v$ is called a node if $\delta_v\geq 3$.

We associate with $\Gamma$ its plumbed 4--manifold $P(\Gamma)$ too,  and  its lattice
$L:=H_2(P(\Gamma),\bZ)$ generated freely by $\{E_v\}_{v\in\cV}$ together with the negative
definite  intersection form  $I:=(E_v,E_w)_{v,w}$. Let $K\in L\otimes \bQ$ be the canonical cycle
defined by the adjunction relations $(K+E_v,E_v)+2=0$ for all $v$, and
 set (the `Riemann-Roch expression') $\chi(l)=-(K+l,l)/2$.  Furthermore,
let $Z_{min}$ be the non--zero, unique
 minimal cycle $Z\in L$ with $(Z,E_v)\leq 0$ for all $v\in\cV$ \cite{Artin62,Artin66}.

  If $(X,o)$ is a complex normal surface singularity, embedded in some
   $(\bC^n,o)$, then its link is $M:=X\cap \{|z|=\epsilon\}$ for $0<\epsilon\ll 1$.
   If  $\tilde{X}\to X$ is one of the
  resolutions of $X$, then the geometric genus $p_g(X,o)$ is defined via the sheaf cohomology
  of the structure sheaf $p_g:=h^1(\cO_{\tilde{X}})$.
  Note that any dual graph $\Gamma$
  of a good resolution of $\tilde{X}\to X$ might serve as a plumbing graph for the link,
  and   $\tilde{X}$ is diffeomorphic with $P(\Gamma)$.  Furthermore, any connected
  negative definite graph is the resolution graph of certain singularity \cite{Gr}, and
  plumbed 3--manifolds of such graphs are irreducible \cite{Ne}.

  $(X,o)$ is called rational if $p_g=0$.
  E.g., all quotient singularities are rational. Artin characterized rationality in terms of
  (any) resolution (or, plumbing) graph of $(X,o)$: $(X,o)$ is rational if and only if $\chi(Z_{min})\geq 1$ (in any resolution) \cite{Artin62,Artin66}.
  Graphs with this property will be called rational.

  Laufer in \cite{Laufer72} provided a simple way to detect the rationality of a graph.
  A computation sequence in $L$, ending in $Z_{min}$ is a sequence $l_0=\sum_v E_v,
  \ l_1,\ldots, l_t=Z_{min}$,  $l_i\in L$, such that for any $i$ there exists $v(i)\in \cV$
  such that $l_{i+1}=l_i+E_{v(i)}$. Laufer proposed the following construction of a computation sequence: start with $l_0=\sum_vE_v$, and construct each $l_i$ inductively. If $l_i$ is already
  constructed, and $(l_i,E_v)\leq 0$ for all $v$ then stop, and take $l=t$. If there exists
  $v$ with $(l_i,E_v)>0$ then take such a $v$ as $v(i)$ and set $l_{i+1}=l_i+E_{v(i)}$.
  Then the  procedure stops after finite steps, and the end cycle $l_t$ is
  always $Z_{min}$. By \cite{Laufer72}, $\Gamma$ is rational if and only if
  at each step along the sequence $(l_i,E_{v(i)})=1$. (The choice of $v(i)$, hence of the
   sequence, usually is not unique. Nevertheless,  $l_t=Z_{min}$ in all cases,
    and the rationality criterion
   is also independent of all choices.)

   Using this algorithm, one verifies the following facts:

(a) Subgraphs of rational graphs are rational.

(b) Decreasing the Euler decoration of a rational graph provides a rational graph.

(c) If $e_v\leq - \delta_v$ for all $v$ 
then $\Gamma$ is rational (in fact, $l_0=Z_{min}=\sum_vE_v$).


A set of vertices $\cB\subset \cV$ is called `set of bad vertices' if replacing the decorations
$e_v$, $v\in\cB$, by sufficiently negative integers $e_v'\ll e_v$ we get a rational
graph \cite{NOSZ,2E}.
The set $\cB$ (even with minimality property) is not unique.
If $\cB=\emptyset$ works then obviously $\Gamma$ is rational.

Set $m(\Gamma):=\min_{\cB \ bad}|\cB |$. From the above (a) we obtain
the following.
\begin{lemma}\label{lem:1}
If $\cB\subset \cV(\Gamma)$ is a set of bad vertices of \ $\Gamma$, and \ $\Gamma'$
is a subgraph of \ $\Gamma$, then $\cB':=\cB\cap \cV(\Gamma')$ might serve as  a set of
bad vertices for \ $\Gamma'$. In particular, $m(\Gamma')\leq m(\Gamma)$.
\end{lemma}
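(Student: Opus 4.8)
The plan is to reduce the statement to the stability property (a) of rational graphs listed above, applied not directly to $\Gamma$ but to the graphs obtained after modifying decorations at $\cB$. First I would fix a set of bad vertices $\cB\subset\cV(\Gamma)$, and let $\Gamma'$ be any subgraph of $\Gamma$; recall a subgraph here means a full subgraph on a subset of vertices, keeping the original Euler decorations. Set $\cB':=\cB\cap\cV(\Gamma')$. The goal is to show that replacing the decorations $e_v$ for $v\in\cB'$ by sufficiently negative integers $e_v'\ll e_v$ turns $\Gamma'$ into a rational graph.

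The key step is the following: by definition of $\cB$, there exist thresholds $N_v$ ($v\in\cB$) such that replacing each $e_v$ ($v\in\cB$) by any integer $\leq N_v$ yields a rational graph $\widetilde{\Gamma}$. Now I would choose the modifications at the vertices of $\cB'$ in $\Gamma'$ using the \emph{same} thresholds, i.e. take $e_v'\leq N_v$ for $v\in\cB'$, and observe that the resulting graph $\widetilde{\Gamma'}$ is exactly the full subgraph of $\widetilde{\Gamma}$ on the vertex set $\cV(\Gamma')$ (here one uses that the vertices of $\Gamma'$ lying in $\cB$ are precisely $\cB'$, so no vertex of $\Gamma'$ outside $\cB'$ gets its decoration changed, and conversely the decorations at $\cB'$ agree with those prescribed in $\widetilde{\Gamma}$). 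Since $\widetilde{\Gamma}$ is rational, property (a) — subgraphs of rational graphs are rational — gives that $\widetilde{\Gamma'}$ is rational. Hence $\cB'$ is a set of bad vertices for $\Gamma'$, which is the first assertion. The inequality $m(\Gamma')\leq m(\Gamma)$ then follows by taking $\cB$ to realize the minimum $m(\Gamma)$: we get a bad set $\cB'$ for $\Gamma'$ of size $|\cB'|=|\cB\cap\cV(\Gamma')|\leq|\cB|=m(\Gamma)$, and $m(\Gamma')$ is by definition no larger than the size of any bad set for $\Gamma'$.

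One minor technical point to be careful about is that a subgraph of a connected negative definite graph may be disconnected; but negative definiteness is inherited by full subgraphs (a principal submatrix of a negative definite symmetric matrix is negative definite), and the notion of rational graph and of bad vertices extends componentwise to forests, with property (a) applying to each component, so the argument goes through verbatim. I expect the main (very modest) obstacle to be purely bookkeeping: making precise that "replacing decorations at $\cB'$ by $\ll$ values" in $\Gamma'$ can be synchronized with a single global choice in $\Gamma$ so that $\widetilde{\Gamma'}$ genuinely sits inside a rational $\widetilde{\Gamma}$ as a full subgraph — once this identification is set up, rationality of $\widetilde{\Gamma'}$ is immediate from (a) and nothing further is needed.
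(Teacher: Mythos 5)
Your argument is correct and is exactly the paper's (only sketched) proof: the paper derives the lemma directly from fact (a) by noting that the graph obtained from $\Gamma'$ by making the decorations at $\cB'$ sufficiently negative is a subgraph of the rational graph obtained from $\Gamma$ by the corresponding modification at $\cB$. Your bookkeeping with the thresholds $N_v$ and the remark on disconnected full subgraphs just makes this explicit, so nothing further is needed.
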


\subsection{Cutting $M$ along an incompressible JSJ torus}\label{ss:cutting}

\bekezd\label{be:CLW}
 First we recall the notion of Dehn surgery and a result of Clay, Lidman and Watson \cite{CLW}.

Assume that $N$ is a 3--manifold with incompressible torus boundary.
A  {\it slope} is a  primitive  element of $H_1(\partial M,\bZ)/\{\pm 1\}$. To any slope $\alpha$
one associates the Dehn filling of $N$ along $\alpha $, $N(\alpha)$,
obtained by identifying the boundary of a solid torus $D^2\times S^1$ with
 $\partial N$ in such a way that $\partial D^2\times \{*\}$ is glued to $\alpha$. According to
 \cite{CLW}, a slope $\alpha$ is called left--orderable if $\pi_{1}(M(\alpha))\simeq
 \pi_1(M)/\langle \alpha\rangle$ is LO.

 \begin{theorem}\label{th:CLW} \cite{CLW}
 Suppose that $M_1$ and $M_2$ are 3--manifolds with incompressible torus boundaries, and
 $\phi:\partial M_1\to \partial M_2$ is a homeomorphism such that $M=M_1\cup_{\phi} M_2$
 is irreducible. If there exists a left--orderable slope $\alpha$ of $M_1$ such that
 $\phi_*(\alpha)$ is also a left--orderable   slope, then $\pi_1(M) $ is LO.
 \end{theorem}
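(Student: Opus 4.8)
\medskip

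The plan is to realise $\pi_1(M)$ as an amalgamated product, extract from the hypotheses a nontrivial homomorphism of $\pi_1(M)$ onto a left--orderable group, and then quote the theorem of Boyer, Rolfsen and Wiest \cite{BRW} to conclude that $\pi_1(M)$ is itself LO. That theorem applies here: $M$ is compact and irreducible by hypothesis, and $\bP^2$--irreducible because it is orientable (an orientable $3$--manifold contains no two--sided $\bP^2$, since a product neighbourhood $\bP^2\times[-1,1]$ would be non--orientable).

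Set $T=\partial M_1$, identified with $\partial M_2$ through $\phi$. The Seifert--van Kampen theorem gives $\pi_1(M)=\pi_1(M_1)*_{\pi_1(T)}\pi_1(M_2)$, and $\pi_1(T)\cong\bZ^2$ embeds into each factor because the boundary tori are incompressible. The slope $\alpha$ is a primitive element of $\pi_1(T)$, with image $\phi_*(\alpha)$ in $\pi_1(M_2)$. Since $\pi_1(T)$ is abelian, killing the normal closure $N:=\langle\langle\alpha\rangle\rangle$ of $\alpha$ in $\pi_1(M)$ turns the amalgam into the pushout of $\pi_1(M_1(\alpha))\leftarrow\pi_1(T)/\langle\alpha\rangle\to\pi_1(M_2(\phi_*\alpha))$, where $\pi_1(T)/\langle\alpha\rangle\cong\bZ$ and, by hypothesis, the Dehn--filled groups $\pi_1(M_i(\alpha))$ are LO --- hence nontrivial and torsion--free. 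So the image of this $\bZ$ in each is a cyclic subgroup of a torsion--free group, i.e.\ trivial or $\cong\bZ$, and
$$\pi_1(M)/N\ \cong\ \pi_1(M_1(\alpha))\ *_{\bar C}\ \pi_1(M_2(\phi_*\alpha)),\qquad \bar C\in\{1,\bZ\},$$
a group which is readily seen to be nontrivial. By \cite{BRW} it therefore suffices to prove that this group is left--orderable.

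If $\bar C$ is trivial it is a free product of LO groups, hence LO. If $\bar C\cong\bZ$ one invokes a gluing theorem for left--orderable groups: an amalgam $A*_C B$ of LO groups over a common cyclic subgroup $C$ is LO as soon as $A$ and $B$ carry \emph{compatible} left--orders, i.e.\ ones that induce the same order on $C$ (which, for $C\cong\bZ$, is merely a choice of positive generator) and place $C$ in the position --- convex, respectively cofinal, according to the version used --- in which the dynamical realisations $A\curvearrowright\bR$ and $B\curvearrowright\bR$ glue along their common $C$--orbits. Thus one is reduced to producing such compatible left--orders on $\pi_1(M_1(\alpha))$ and $\pi_1(M_2(\phi_*\alpha))$, matched under $\phi$; a few degenerate configurations (where one of the images $\bar C$ interacts badly with a proper subgroup) must be disposed of separately, but I expect these not to arise, precisely because of incompressibility.

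\textbf{The main obstacle} is securing those compatible left--orders: arranging the peripheral cyclic subgroup $\bar C$ --- which is the image of a boundary slope of $M_i$ --- to sit correctly with respect to a left--order of each $\pi_1(M_i(\alpha))$, and to do so consistently for the two sides under $\phi$. This is the step that genuinely uses $3$--manifold geometry rather than formal group theory: one realises the left--orderability of $\pi_1(M_i(\alpha))$ as an orientation--preserving action on $\bR$, uses the peripheral structure (equivalently, the relative--convexity properties of boundary slopes in such groups) to bring the $\bar C$--action into the required normal form, and then conjugates the two actions into agreement. Everything else --- van Kampen, reduction modulo $N$, the free--product case, and the final appeal to \cite{BRW} --- I expect to be routine; the synchronisation of the two peripheral orderings carries the weight of the argument.
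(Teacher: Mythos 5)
The statement you were asked to prove is not proved in the paper at all: it is quoted verbatim from Clay--Lidman--Watson \cite{CLW}, so the only meaningful comparison is with the proof given there. Your skeleton does mirror that proof's strategy: decompose $\pi_1(M)$ by van Kampen over $\pi_1(T)\cong\bZ^2$, kill the normal closure of $\alpha$, identify the quotient with a pushout of the Dehn--filled groups over $\pi_1(T)/\langle\alpha\rangle\cong\bZ$, and conclude via the Boyer--Rolfsen--Wiest criterion \cite{BRW} (the manifold is compact, irreducible by hypothesis, and $\bP^2$--irreducible since orientable). The free--product case and the appeal to \cite{BRW} are fine.

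The genuine gap is exactly the step you label ``the main obstacle'' and then leave as an expectation: showing that the amalgam $\pi_1(M_1(\alpha))\ast_{\bZ}\pi_1(M_2(\phi_*\alpha))$ is left--orderable. Left--orderability is \emph{not} preserved by amalgamation over an infinite cyclic subgroup in general, and the ``gluing theorem'' you invoke is not correctly stated: mere agreement of the two orders on $C\cong\bZ$ is only a choice of sign and could always be arranged, so if that sufficed every amalgam of LO groups over $\bZ$ would be LO --- not a theorem you can cite. The usable criterion (Bludov--Glass, which is what \cite{CLW} actually apply) requires genuinely stronger compatibility of the orderings (convexity--type conditions on the amalgamated subgroup / a compatible family of orders), and verifying that such orders exist on the two filled groups, with the images of the core circles sitting correctly, is precisely the mathematical content of the theorem; your proposal assumes it (``I expect\dots because of incompressibility''), which is circular. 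A secondary slip: in the mixed case where $\pi_1(T)/\langle\alpha\rangle$ dies in one filled group but injects into the other, the quotient is not an amalgam of the two filled groups over $\bar C$, but rather $\pi_1(M_1(\alpha))\ast\bigl(\pi_1(M_2(\phi_*\alpha))/\langle\langle\,\text{image}\,\rangle\rangle\bigr)$ (or symmetrically); this case is still harmless --- project onto the factor in which the image is trivial, which is nontrivial and LO, and apply \cite{BRW} --- but the repair is not the isomorphism you wrote.
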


\bekezd \label{be:cutting} Let $\Gamma$ be as in \ref{ss:graphs}. We assume that $\Gamma$ is
 minimal, that is, it has no vertex $v$ with $e_v=-1$ and $\delta_v\leq 2$. In that case,
 any edge $e=(v,w)$
 on a (minimal) path connecting two nodes determines an incompressible
 JSJ torus $T\subset M$. Cutting $M$ along $T$ provides two manifolds $M_v$ and $M_w$,
both with  incompressible torus boundaries. We also write $\Gamma\setminus \{e\}=\Gamma_v\sqcup
\Gamma_w$ (with $v\in \cV(\Gamma_v)$);
 $M_v$ is obtained from $M(\Gamma_v)$ by removing a solid torus.

 We wish to apply Theorem \ref{th:CLW} for the decomposition $M=M_v\cup_{\phi}M_w$
 (where $\phi:\partial M_w\to -\partial M_v$), as an inductive step, in order to prove the left-orderability of $\pi_1(M)$. In this case, $\Gamma$ will be a graph with $m(\Gamma)\geq 2$, 
 and we search for a slope $\alpha\in H_1(\partial M_w,\bZ)/\{\pm 1\}$ with the next properties:

 (i) $M_v(\phi_*(\alpha))$ is still non-rational connected negative definite
 graph manifold (together with $M$ it satisfies  certain inductive step, being `less complicated'
 than $M$);

 (ii) $M_w(\alpha)$ has first  Betti number one, hence its fundamental group is LO.

 Condition (i) will be guaranteed by the convenient choice of the edge $e$.
 Next, we describe $\alpha$ in terms of $\Gamma_w$ in order to have condition (ii) satisfied.

 \bekezd \label{be:det} Assume that $\Gamma$ is a not--necessarily negative definite,  or
not--necessarily  connected graph, without loops,
 when we allow even rational decorations as well. Let $I$ be its intersection form, and set
 $\det(\Gamma):=\det(-I)$. Obviously, if $\Gamma$ (that is, $I$) is negative definite,
 then $\det (\Gamma  )>0$. Linear algebra and Sylvester's criterion show  the following:
 \begin{lemma}\label{lem:det}
  (a)  Fix an edge $e=(v,w)$, and let $[v,w]$ be the subgraph with vertices $v,w$ and edge $e$.
  Then $\det(\Gamma)=\det(\Gamma\setminus e)-\det (\Gamma\setminus [v,w])$.

  (b) $\Gamma$ is negative definite if and only if $\Gamma\setminus e$ is negative definite and
  $\det(\Gamma)>0$.
\end{lemma}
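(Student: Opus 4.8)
We may assume throughout that $\Gamma$ is connected: every quantity appearing in (a) is multiplicative over the connected components of $\Gamma$, and in (b) negative definiteness and positivity of the determinant are tested component by component. The one structural fact both parts rest on is that, since $\Gamma$ is a forest, the edge $e=(v,w)$ is a bridge; hence $\Gamma\setminus e=\Gamma_v\sqcup\Gamma_w$ with $v\in\cV(\Gamma_v)$, $w\in\cV(\Gamma_w)$, and $e$ is the \emph{only} edge of $\Gamma$ joining $\cV(\Gamma_v)$ to $\cV(\Gamma_w)$. In particular $\Gamma_v$ and $\Gamma_w$ are \emph{induced} subgraphs of $\Gamma$ (deleting $e$ changes no other adjacency inside either vertex set), so $-I(\Gamma_v)$ and $-I(\Gamma_w)$ are principal submatrices of $-I(\Gamma)$. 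This bridge property is genuinely needed: the identity in (a) is false when $e$ lies on a cycle — for a triangle the right--hand side is already off by $2$ — so one cannot drop the forest hypothesis standing throughout the paper.

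For (a), order the vertices of $\Gamma$ so that those of $\Gamma_v$ come first and those of $\Gamma_w$ last, and put $P=-I(\Gamma_v)$, $Q=-I(\Gamma_w)$. Then
$$-I(\Gamma)=\begin{pmatrix}P&N\\ N^{\top}&Q\end{pmatrix},\qquad -I(\Gamma\setminus e)=\begin{pmatrix}P&0\\ 0&Q\end{pmatrix},$$
where $N$ has a single nonzero entry, equal to $-1$, in the slot indexed by $(v,w)$, i.e.\ $N=-\epsilon_v\epsilon_w^{\top}$ with $\epsilon_v,\epsilon_w$ the corresponding coordinate vectors. Assuming first that $P$ is invertible, the Schur complement formula gives
$$\det(-I(\Gamma))=\det P\cdot\det\!\big(Q-N^{\top}P^{-1}N\big)=\det P\cdot\det\!\big(Q-(P^{-1})_{vv}\,\epsilon_w\epsilon_w^{\top}\big);$$
since $\epsilon_w\epsilon_w^{\top}$ only perturbs the $(w,w)$ entry of $Q$, expanding $\det$ linearly in that entry yields $\det(-I(\Gamma))=\det P\det Q-(P^{-1})_{vv}\,\det P\cdot\det(Q')$, where $Q'=Q$ with row and column $w$ deleted $=-I(\Gamma_w\setminus w)$. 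Now $\det P\det Q=\det(\Gamma\setminus e)$, while $(P^{-1})_{vv}\det P$ is the $(v,v)$ cofactor of $P$, equal to $\det(-I(\Gamma_v\setminus v))$; since $\Gamma\setminus[v,w]=(\Gamma_v\setminus v)\sqcup(\Gamma_w\setminus w)$, the correction term is exactly $\det(-I(\Gamma_v\setminus v))\cdot\det(-I(\Gamma_w\setminus w))=\det(\Gamma\setminus[v,w])$. This proves (a) whenever $\det P\neq0$; both sides of (a) are polynomials in the (rational) decorations and $\det P$ is a non--zero polynomial in those of $\Gamma_v$ (take them very negative), so the identity holds identically. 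Equivalently, one may just expand $\det(-I(\Gamma))$ by multilinearity in the two columns indexed by $v$ and $w$: the unperturbed term is $\det(\Gamma\setminus e)$, the two singly--mixed terms vanish because in $-I(\Gamma\setminus e)$ the vertex $w$, resp.\ $v$, sits in a diagonal block disjoint from $v$, resp.\ $w$, and the doubly--mixed term equals $-\det(\Gamma\setminus[v,w])$.

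For (b), first suppose $\Gamma$ is negative definite, i.e.\ $-I(\Gamma)$ is positive definite. Then $\det(\Gamma)=\det(-I(\Gamma))>0$, and every principal submatrix of $-I(\Gamma)$ is positive definite; applied to the principal submatrices $-I(\Gamma_v)$ and $-I(\Gamma_w)$ this shows $-I(\Gamma\setminus e)=-I(\Gamma_v)\oplus -I(\Gamma_w)$ is positive definite, i.e.\ $\Gamma\setminus e$ is negative definite. Conversely, suppose $\Gamma\setminus e$ is negative definite and $\det(\Gamma)>0$, and order $\cV=\{u_1,\dots,u_{n-2},v,w\}$ with $\{u_1,\dots,u_{n-2}\}=\cV\setminus\{v,w\}$ in any order. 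We test Sylvester's criterion for $-I(\Gamma)$ in this order. For $k\le n-2$ the $k$-th leading principal minor is $\det(-I(\Gamma'))$ with $\Gamma'$ the subgraph of $\Gamma$ spanned by $u_1,\dots,u_k$; as these vertices avoid both endpoints of $e$, $\Gamma'$ is also an induced subgraph of $\Gamma\setminus e$, so this is a principal minor of the positive definite matrix $-I(\Gamma\setminus e)$, hence positive. For $k=n-1$ the minor is $\det(-I(\Gamma\setminus w))$; deleting $w$ already deletes $e$, so $\Gamma\setminus w$ is again an induced subgraph of $\Gamma\setminus e$, and the minor is positive for the same reason. For $k=n$ the minor is $\det(\Gamma)>0$ by hypothesis. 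Thus all leading principal minors of $-I(\Gamma)$ are positive, so $-I(\Gamma)$ is positive definite, i.e.\ $\Gamma$ is negative definite.

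The content is purely linear--algebraic and there is no deep obstruction. The two points that need care are: remembering that $e$ must be a bridge (automatic for our trees, but, as noted, the deletion formula really does fail across a cycle), and, in the harder implication of (b), choosing the vertex order so that $v$ and $w$ come \emph{last} — then every leading principal minor except the topmost one is automatically a principal minor of $-I(\Gamma\setminus e)$, and the single remaining inequality is exactly the hypothesis $\det(\Gamma)>0$.
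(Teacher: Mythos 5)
Your proof is correct and takes essentially the approach the paper intends: the paper gives no details beyond ``linear algebra and Sylvester's criterion,'' and your determinant expansion across the bridge $e$ (Schur complement, or equivalently multilinearity in the columns of $v$ and $w$) for (a), together with Sylvester's criterion applied with $v,w$ ordered last for (b), is exactly that argument carried out in full.
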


\bekezd\label{be:alpha} (Continuation of \ref{be:cutting}.)
In a suitable basis of $H_1(\partial M_w,\bZ)$, given by  the plumbing construction,
the slope $\alpha$ can be represented by a rational number $r=r(\alpha)$.
Furthermore, since the gluing in the plumbing construction (in these bases) is the matrix
{\tiny  $\begin{pmatrix} 0& 1\\1 &0 \end{pmatrix}$}, $\phi_*(\alpha)$ corresponds to the
rational number $1/r$.
Hence, $M_v(\phi_*(\alpha))$ and
$M_w(\alpha)$ can be represented by the graphs $\Gamma_v(1/r)$ and $\Gamma_w(r)$ below

\begin{picture}(181,34)(-60,3)
\drawline(20,10)(60,10)(60,30)(20,30)(20,10)	
\drawline(180,10)(220,10)(220,30)(180,30)(180,10)
\put(80,20){\circle*{4}}
\put(160,20){\circle*{4}}
\put(55,20){\circle*{4}}
\put(185,20){\circle*{4}}
\drawline(55,20)(80,20)
\drawline(160,20)(185,20)
\put(30,20){\makebox(0,0){\small{$\Gamma_v$}}}
\put(210,20){\makebox(0,0){\small{$\Gamma_w$}}}
\put(55,15){\makebox(0,0){\small{$v$}}}
\put(185,15){\makebox(0,0){\small{$w$}}}
\put(55,25){\makebox(0,0){\small{$e_v$}}}
\put(187,25){\makebox(0,0){\small{$e_w$}}}
\put(80,26){\makebox(0,0){\small{$1/r$}}}
\put(160,26){\makebox(0,0){\small{$r$}}}
\end{picture}

Here the new vertices decorated by rational numbers can be replaced by strings
whose decorations are given by the entries of the corresponding continued fraction
(write $r$ as $[e_1,\ldots, e_s]=e_1-\frac{1}{e_2-\cdots}$, with $e_1\leq -1$,
$e_i\leq -2$ for $i\geq 2$,
 then the $\{e_i\}$'s are the decorations of the string).

 \begin{lemma}\label{lem:2}
 Take $r:=-\det(\Gamma_w\setminus w)/\det(\Gamma_w)$. Then
 $\det(\Gamma_w(r))=0$, hence $\Gamma_w(r)$ is negative semidefinite. Furthermore,
   $\Gamma_v(1/r)$ is negative definite with $\det(\Gamma_v(1/r))=\det(\Gamma)/
   \det(\Gamma_w\setminus w)$.
 \end{lemma}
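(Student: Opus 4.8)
The plan is to compute the three determinants appearing in the statement directly, using the surgery description of $\Gamma_w(r)$ and $\Gamma_v(1/r)$ as graphs obtained by attaching a rational vertex and applying the determinant-splitting formula of Lemma \ref{lem:det}(a). First I would treat $\det(\Gamma_w(r))$. The graph $\Gamma_w(r)$ is obtained from $\Gamma_w$ by adjoining one new vertex $u$, decorated by $r$, and joined by an edge to $w$. Applying Lemma \ref{lem:det}(a) to the edge $e'=(w,u)$ of $\Gamma_w(r)$ gives
$$\det(\Gamma_w(r))=\det\bigl(\Gamma_w(r)\setminus e'\bigr)-\det\bigl(\Gamma_w(r)\setminus[w,u]\bigr)=\det(\Gamma_w)\cdot(-r)-\det(\Gamma_w\setminus w),$$
since deleting $e'$ leaves the disjoint union of $\Gamma_w$ and the single vertex $u$ (whose $1\times1$ contribution to $-I$ is $-r$), while deleting both $w$ and $u$ leaves $\Gamma_w\setminus w$. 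Substituting $r=-\det(\Gamma_w\setminus w)/\det(\Gamma_w)$ makes the right-hand side $\det(\Gamma_w\setminus w)-\det(\Gamma_w\setminus w)=0$, so $\det(\Gamma_w(r))=0$. Since $\Gamma_w$ is negative definite (it is a subgraph of the negative definite $\Gamma$), Lemma \ref{lem:det}(b) shows $\Gamma_w(r)$ is \emph{not} negative definite; being a plumbing tree with $\Gamma_w(r)\setminus e'$ negative definite and $\det=0$, it is negative semidefinite, exactly as claimed. One should also note $\det(\Gamma_w)\neq 0$ so that $r$ is well defined, which again follows from negative definiteness of $\Gamma_w$.

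For the second assertion, I would run the same computation for $\Gamma_v(1/r)$: adjoining a vertex $u'$ decorated $1/r$ and joined to $v$, and applying Lemma \ref{lem:det}(a) to the edge $(v,u')$,
$$\det(\Gamma_v(1/r))=\det(\Gamma_v)\cdot(-1/r)-\det(\Gamma_v\setminus v).$$
Now I would eliminate $1/r$ using $1/r=-\det(\Gamma_w)/\det(\Gamma_w\setminus w)$, and then reconstruct $\det(\Gamma)$ out of the pieces. The key identity is that $\Gamma$ is exactly $\Gamma_v\cup\Gamma_w$ joined by the single edge $e=(v,w)$; applying Lemma \ref{lem:det}(a) to that edge gives $\det(\Gamma)=\det(\Gamma_v)\det(\Gamma_w)-\det(\Gamma_v\setminus v)\det(\Gamma_w\setminus w)$ (the two deletions split as disjoint unions, and determinant is multiplicative over components). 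Combining this with the displayed expression for $\det(\Gamma_v(1/r))$ and clearing denominators yields
$$\det(\Gamma_v(1/r))=\frac{\det(\Gamma_v)\det(\Gamma_w)-\det(\Gamma_v\setminus v)\det(\Gamma_w\setminus w)}{\det(\Gamma_w\setminus w)}=\frac{\det(\Gamma)}{\det(\Gamma_w\setminus w)},$$
which is the stated formula. Finally, negative definiteness of $\Gamma_v(1/r)$ follows from Lemma \ref{lem:det}(b): $\Gamma_v(1/r)\setminus(v,u')=\Gamma_v\sqcup\{u'\}$ is negative definite (as $\Gamma_v$ is a subgraph of $\Gamma$ and $1/r<0$ because both $\det(\Gamma_w)$ and $\det(\Gamma_w\setminus w)$ are positive), and $\det(\Gamma_v(1/r))=\det(\Gamma)/\det(\Gamma_w\setminus w)>0$ since both $\det(\Gamma)>0$ and $\det(\Gamma_w\setminus w)>0$ by negative definiteness of $\Gamma$ and of its subgraph $\Gamma_w\setminus w$.

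I expect the only genuinely delicate point to be bookkeeping: making sure the sign conventions in $r$ versus $1/r$, and in the $1\times1$ blocks $-r$ and $-1/r$ of $-I$, are consistent with the gluing matrix $\left(\begin{smallmatrix}0&1\\1&0\end{smallmatrix}\right)$ used in \ref{be:alpha}, and confirming that replacing a rational vertex by the string given by its continued fraction does not change the determinant (this is the standard fact that for $r=[e_1,\dots,e_s]$ the determinant of the string equals, up to the chosen normalization, the denominator of $r$, which is what enters Lemma \ref{lem:det}). Once the conventions are pinned down, everything reduces to the two applications of Lemma \ref{lem:det}(a) above plus the multiplicativity of $\det$ over disjoint unions; there is no real obstacle beyond care with these normalizations. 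The positivity statements needed for the semidefinite/definite conclusions are then immediate from Sylvester's criterion applied to the negative definite graph $\Gamma$ and its subgraphs.
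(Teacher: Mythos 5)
Your proposal is correct and follows essentially the same route as the paper, whose proof consists precisely of applying Lemma \ref{lem:det}(a) to the two newly created edges and to the edge $e$ in $\Gamma$, together with Lemma \ref{lem:det}(b) and positivity of the minors of the negative definite graph. The only minor looseness is in the ``hence negative semidefinite'' step, where the clean justification is eigenvalue interlacing with the principal submatrix obtained by deleting the new \emph{vertex} (i.e.\ $\Gamma_w$), rather than the graph with the new edge deleted; this is at the same level of detail as the paper itself.
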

\begin{proof}
Apply Lemma \ref{lem:det} for the new edges (and for $e$ in $\Gamma$).
\end{proof}

\section{Proofs}\label{s:proof}

\bekezd\label{be:1} {\bf Proof of Theorem \ref{th:intr2}.}
We show that if $\Gamma$ is not a rational graph then $M(\Gamma)$ is not an L--space.
We  consider the family of graphs $\Gamma$  with $m(\Gamma)\geq 1$, and run
induction over  the number of nodes $\cN$ of graphs.
For any $\Gamma$ the inductive construction provides another graph with less nodes,
  however, we have to be careful not to obtain
   one with $m(\Gamma)=0$. On the other hand, having
$m(\Gamma)\leq |\cN(\Gamma)|$ (since the set of nodes might serve as a set of bad
   vertices), by increasing the number of nodes we necessarily arrive to the situation when
   $m(\Gamma)=1$; these are exactly the starting cases of the induction.

\vspace{2mm}

\noindent {\bf Starting case.} \ Hence,
to start the induction, consider any connected, negative definite graph with
$m(\Gamma)=1$ (and with arbitrarily many nodes). Then $M(\Gamma) $ is not an L--space by
Theorem \ref{th:intr3}, and $\pi_1(M(\Gamma))$ is left-orderable by Theorem \ref{th:intr4}.

\vspace{2mm}

Next, assume that the statement is proved for any $\Gamma$ with $N-1$ nodes (where $N\geq 2$),
and take a graph $\Gamma$ with $N$ nodes and $m(\Gamma)\geq 2$. We wish to show that
$\pi_1(M(\Gamma))$ is LO. We can also assume that $\Gamma$ is minimal.

  \vspace{2mm}

\noindent {\bf Case 1.} \ Assume that $\Gamma$ has a vertex $v$ such that at least two of the
connected components of $\Gamma\setminus v$ contain nodes of $\Gamma$. Let $\{\Gamma_i\}_{i\in \cI}$ be the connected components of $\Gamma\setminus v$,  $\Gamma^\downarrow$ be the graph obtained from
$\Gamma$ after replacing $e_v$ with $e_v'\ll e_v$ (in such a way that the multiplicity of $E_v$ in
$Z_{min}(\Gamma^\downarrow)$ is 1), $\Gamma_i(v)^\downarrow$ is the minimal connected subgraph
of  $\Gamma^\downarrow$ supported on $\Gamma_i\cup v$ (hence $v$ is glued by an edge to $\Gamma_i$
and decorated by $e_v'$).
Since $m(\Gamma)\geq 2$ we get that $\Gamma^\downarrow$ is not rational.

When we run the Laufer algorithm
for $Z_{min}(\Gamma^\downarrow)$, the different parts $\Gamma_i(v)^\downarrow$ do not interfere, and  $Z_{min}(\Gamma^\downarrow)|_{\Gamma_i(v)^\downarrow}=Z_{min}
(\Gamma_i(v) ^\downarrow)$. Since in the computation sequence of $Z_{min}(\Gamma^\downarrow)$
there is at least one `jump' with $(l_i,E_{v(i)})\geq 2$, such a jump will apear for some $i\in\cI$
in the computations sequence of  $ Z_{min}(\Gamma_i(v) ^\downarrow)$ too. In particular,
this graph $\Gamma_i(v) ^\downarrow$ is not rational. Choose an index $j\in\cI$, $j\not=i$,
such that $\Gamma_j$ contains at least one node of $\Gamma$ 
(the assumption of case 1 is used here).

 Then choose the edge $e=(v,w)$ in such a way that $\Gamma_w$ (as in \ref{be:cutting})
 equals $\Gamma_j$. Then we apply Theorem \ref{th:CLW} for $\Gamma_v(1/r)$ and
 $\Gamma_w(r)$, where $r$ is as in Lemma \ref{lem:2}. Since $\det(\Gamma_w(r))=0$,
 $\pi_1(M_w(\alpha))$ is LO by the theorem of Boyer, Rolfsen and Wiest  \cite{BRW}
 reviewed  in the introduction. Next,  $\Gamma_v(1/r)$ is a connected,
  negative definite graph, and we claim that it is not rational. Indeed, the non-rational graph
 $\Gamma_i(v)^\downarrow$ is obtained from it by decreasing the support and decreasing the decorations. Since $\Gamma_v(1/r)$ contains less nodes than $\Gamma$, by the inductive step
 $\pi_1(M_v(\phi_*(\alpha)))$ is LO. Hence $\pi_1(M(\Gamma)))$ is LO by Theorem \ref{th:CLW}.

  \vspace{2mm}

  \noindent {\bf Case 2.}\ Assume that such a vertex $v$ (as in Case 1) does not exist.
  This can happen only if $\Gamma$ has exactly two nodes $(n_1,n_2)$
  and they are adjacent ($|\cN|\leq 1$ would imply $m(\Gamma)\leq 1$).
  Let us blow up the edge $(n_1,n_2)$ and  denote the new graph by $\Gamma'$, and the newly created $(-1)$--vertex by $v$. If $v$ is  the only bad vertex of $\tilde{\Gamma}$
  (that is, with similar  notation as above, $\tilde{\Gamma}^\downarrow$ is rational)
  then we conclude as in the
  starting case by Theorems \ref{th:intr3} and \ref{th:intr4}.

 If $\tilde{\Gamma}^\downarrow$ is not rational then we repeat the arguments of Case 1 for this graph and vertex $v$.

 This ends the proof of Theorem \ref{th:intr2}.

\bekezd \label{ss:3.3} {\bf An alternative proof of Theorem \ref{th:intr2} showing
the existence of the foliation.}

In this subsection we indicate how the above proof from \ref{be:1} should
be changed if we wish to replace the LO property of the fundamental groups with the existence
of the foliations.

We will run the same induction guided by the same geometric construction (`decomposition' of 
$M$ into $M_v(\phi_*(\alpha))$ and $M_w(\alpha)$ with graphs $\Gamma_v(1/r)$ and $\Gamma_w(r)$).

As a part of the preparation, we have to find the analogue of part (B) from the introduction
(that is, of Theorem of Boyer, Rolfsen and Wiest \cite{BRW}, 
 which guarantees that $\pi_1(M_w(\alpha))$ is LO).
The  next fact  was already considered in \cite{M}.

\begin{proposition}\label{prop:1}
Let $M_w(\alpha)$ be the manifold constructed in \ref{be:alpha}. Then it admits a transversal 
coorientable foliation.
\end{proposition}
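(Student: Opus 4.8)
The manifold $M_w(\alpha)$ is plumbed along the graph $\Gamma_w(r)$, which by Lemma \ref{lem:2} has $\det(\Gamma_w(r))=0$ and hence is negative \emph{semi}definite (it is negative definite off a one--dimensional kernel of the intersection form). The plan is to invoke (an extension of) the Eisenbud--Hirsch--Neumann criterion \cite{EHN}: a plumbed graph manifold associated with a connected, negative semidefinite (but not negative definite) plumbing tree admits a cooriented foliation transversal to the Seifert fibers of its JSJ pieces. Thus the whole proof reduces to verifying that $\Gamma_w(r)$ is connected, has no loops, and is negative semidefinite with a nontrivial kernel — all of which are already recorded in Lemma \ref{lem:2} and the construction of \ref{be:alpha} (recall the rational vertex decorated by $r$ is to be expanded into a string via the continued fraction $r=[e_1,\ldots,e_s]$, so $\Gamma_w(r)$ is an honest integral plumbing tree).

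First I would recall precisely the statement we need from \cite{EHN} in the form used in \cite{M}: for a negative semidefinite plumbing graph $\Gamma'$ that is not negative definite, the plumbed $3$--manifold $M(\Gamma')$ carries a $C^0$ (indeed smooth, away from the boundary leaves of the JSJ pieces) coorientable foliation transverse to the $S^1$--fibers. The seminal case is Seifert fibered (star--shaped $\Gamma'$), and the general graph--manifold case is assembled by gluing Seifert pieces along their JSJ tori so that the foliations match up along the boundary; the semidefiniteness guarantees that the Euler numbers of the pieces have the correct signs for such a transverse foliation to exist on each piece and to be glueable. Then I would observe that $\Gamma_w(r)$ is exactly such a graph: negative semidefinite by Lemma \ref{lem:2}, not negative definite because $\det(\Gamma_w(r))=0$, and connected because $\Gamma_w$ was a connected component of $\Gamma\setminus\{e\}$ with the extra rational leg attached at $w$.

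The only genuinely delicate point is making sure the kernel of the intersection form of $\Gamma_w(r)$ is one--dimensional (equivalently, that $\Gamma_w$ itself, or $\Gamma_w\setminus w$, stays negative definite), so that $\Gamma_w(r)$ is semidefinite \emph{and} supports the relevant nonzero semicycle on which the transverse foliation is modeled; but this too is immediate, since $\Gamma_w$ is a subgraph of the negative definite $\Gamma$, hence negative definite, and adjoining the $r$--string to $w$ only adds the single null direction produced by the equation $\det(\Gamma_w(r))=0$. I expect the main obstacle to be purely expository: one must cite the EHN result in sufficient generality (graph manifolds, not just Seifert manifolds) and note that the "transversal coorientable" conclusion is exactly what the gluing construction delivers on the boundary torus $\partial M_w$, since that is the interface along which $\Gamma_w(r)$ will later be glued to $\Gamma_v(1/r)$ in \ref{ss:3.3}. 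With that in hand the proof is a one--line application.

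\begin{proof}
By Lemma \ref{lem:2}, with $r=-\det(\Gamma_w\setminus w)/\det(\Gamma_w)$, the graph $\Gamma_w(r)$ (with the rational vertex replaced by the string given by the continued fraction of $r$, cf. \ref{be:alpha}) is a connected plumbing tree with $\det(\Gamma_w(r))=0$; since $\Gamma_w$ is a subgraph of the negative definite graph $\Gamma$, it is negative definite, so the intersection form of $\Gamma_w(r)$ has exactly a one--dimensional kernel, i.e. $\Gamma_w(r)$ is negative semidefinite but not negative definite. By the theorem of Eisenbud, Hirsch and Neumann \cite{EHN} in the Seifert fibered case, extended to graph manifolds by gluing the transverse foliations of the JSJ Seifert pieces along their common tori (cf. \cite{M}), a plumbed $3$--manifold associated with a connected negative semidefinite, not negative definite, plumbing tree admits a $C^0$ coorientable foliation transverse to the $S^1$--fibers of its Seifert pieces. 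Applying this to $\Gamma_w(r)$ yields the desired transversal coorientable foliation on $M_w(\alpha)=M(\Gamma_w(r))$.
\end{proof}
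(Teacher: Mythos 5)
There is a genuine gap: your proof cites, as a known theorem, exactly the statement that has to be proved. The Eisenbud--Hirsch--Neumann result \cite{EHN} applies to \emph{Seifert fibered} manifolds only (the paper is explicit about this in the remark in the introduction, and \cite{M} contains only particular cases of the graph--manifold statement). Your sentence asserting that ``the general graph--manifold case is assembled by gluing Seifert pieces along their JSJ tori so that the foliations match up \dots; the semidefiniteness guarantees that the Euler numbers of the pieces have the correct signs'' is precisely the nontrivial content of Proposition \ref{prop:1}, and it is not substantiated: the whole difficulty in producing taut foliations on graph manifolds is arranging that the boundary slopes realized by the transverse foliations of the individual Seifert pieces agree under the plumbing gluing, and an appeal to ``signs of Euler numbers'' does not address this.

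The paper's proof supplies exactly this missing mechanism, by induction on the number of JSJ Seifert components of $M_w(\alpha)$. If $\Gamma_w(r)$ has one node, EHN applies directly (this is the only place \cite{EHN} is used). Otherwise one cuts $\Gamma_w(r)$ along a separating edge $e'=(v',w')$, sets $r':=-\det(\Gamma_w(r)_{w'}\setminus w')/\det(\Gamma_w(r)_{w'})$, and checks by the determinant identities of Lemma \ref{lem:det} that \emph{both} closed manifolds obtained by the fillings $r'$ and $1/r'$ have negative semidefinite plumbing graphs with vanishing determinant (here the hypothesis $\det(\Gamma_w(r))=0$ is used to get $\det(\Gamma_w(r)_{v'}(1/r'))=0$ as well). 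By induction each carries a transversal coorientable foliation; restricting to the pieces with torus boundary, the induced boundary foliations consist of parallel copies of the filling slopes, which are $r'$ and $1/r'$ and hence correspond to each other under the plumbing gluing matrix, so the foliations glue to a transversal foliation of $M_w(\alpha)$; coorientability is preserved by \cite[Lemma 5.5]{BRW}. Your reduction of the problem to ``$\Gamma_w(r)$ is connected, semidefinite, not definite'' is fine as far as it goes, but without the inductive cut--and--refill argument (or an honest reference containing the graph--manifold extension) the proof is circular at its key step.
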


Here, by transversality  of the foliation we mean that it is transverse to the Seifert 
fibration of each JSJ component. In particular, they are taut. (In \cite{B} 
is also shown that they are $\bR$--covered with a non--trivial action on $Homeo^+(\bR)$.)

The statement of proposition \ref{prop:1} follows by induction over the number of JSJ 
Seifert components of $M_w(\alpha)$. If
 $\Gamma_w(r)$ contains only one node, that is,  $M_w(\alpha)$ is Seifert, then 
the statement is covered exactly by a theorem of Eisenbud, Hirsch and Neumann \cite{EHN}.

If $\Gamma_w (r)$ has more nodes then let us consider a `separating edge' (on a path connecting 
two nodes)  $e'=(v',w')$ in $\Gamma_w(r)$. Then we repeat the cutting construction 
from \ref{be:cutting}, applied now to $\Gamma_w(r)$, and along $e'$.
The graph $\Gamma_w(r)\setminus e'$ has two components, $\Gamma_w(r)_{v'}$ and 
$\Gamma_w(r)_{w'}$.
 Since $\Gamma_w(r)$ is negative semidefinite, the determinants of the proper
 subgraphs $\Gamma_w(r)_{w'}\setminus w'$ and $\Gamma_w(r)_{w'}$ are positive. 
 Set $r':=-\det (\Gamma_w(r)_{w'}\setminus w')/\det(\Gamma_w(r)_{w'})$, and construct the new graphs $\Gamma_w(r)_{v'}(1/r')$ and $\Gamma_w(r)_{w'}(r')$:

 \begin{picture}(181,34)(-60,3)
\drawline(0,10)(65,10)(65,30)(0,30)(0,10)	
\drawline(180,10)(240,10)(240,30)(180,30)(180,10)
\put(85,20){\circle*{4}}\put(10,20){\circle*{4}}
\put(160,20){\circle*{4}}
\put(55,20){\circle*{4}}\put(-20,20){\circle*{4}}
\put(190,20){\circle*{4}}
\drawline(55,20)(85,20)\drawline(-20,20)(10,20)
\drawline(160,20)(190,20)
\put(220,20){\makebox(0,0){\small{$\Gamma_w(r)_{w'}$}}}
\put(55,15){\makebox(0,0){\small{$v'$}}}\put(10,15){\makebox(0,0){\small{$w$}}}
\put(190,15){\makebox(0,0){\small{$w'$}}}
\put(55,25){\makebox(0,0){\small{$e_{v'}$}}}
\put(190,25){\makebox(0,0){\small{$e_{w'}$}}}\put(10,25){\makebox(0,0){\small{$e_w$}}}
\put(85,26){\makebox(0,0){\small{$1/r'$}}}
\put(160,26){\makebox(0,0){\small{$r'$}}}\put(-20,26){\makebox(0,0){\small{$r$}}}
\end{picture}

Similarly as in Lemma \ref{lem:2}, 
 $\det(\Gamma_w(r)_{w'}(r'))=0$.  
 Since $\Gamma_w(r)_{w'}$ is negative definite,
  $\Gamma_w(r)_{w'}(r')$ is negative semidefinite.
  Furthermore,
   $\det(\Gamma_w(r)_{v'}(1/r'))=\det(\Gamma_w(r))/\det (\Gamma_w(r)_{w'}\setminus w')=0$ too.
 Since $\Gamma_w(r)_{v'}$ is negative definite, we obtain that $\Gamma_w(r)_{v'}(1/r')$
 is negative semidefinite too. 
   
By induction, both manifolds $M_{w}(\alpha)_{v'}(\phi'_*(\alpha'))$ and
$M_{w}(\alpha)_{w'}(\alpha')$ admit transversal  foliations. Let us restrict them
to the manifolds $M_{w}(\alpha)_{v'}$ and
$M_{w}(\alpha)_{w'}$, both with torus boundary. 
The foliation of $M_{w}(\alpha)_{w'}(\alpha')$ induces a foliation of the boundary torus.
Since each leaf of the foliation of the torus extends to a foliation of the solid torus
used in the Dehn filling, the leaf is homological trivial in the solid torus, hence it 
consists of (certain parallel) copies of the slope. Hence the induced foliation on the torus consists of parallel simple/primitive loops parallel to the slope. The same is valid for the 
manifold   $M_{w}(\alpha)_{v'}(\phi'_*(\alpha'))$ and for the newly created torus boundary. 
Since, by construction, they are represented by $r'$ and $1/r'$ respectively, 
by  plumbing  they can be glued together into a foliation of $M_w(\alpha)$.

Since the gluing respects the JSJ decompostion, and the two pieces (by induction) are 
transversal, the newly created foliation is transversal  too.
Moreover,  in \cite[Lemma 5.5]{BRW}  is proved that transversal  foliations on Seifert pieces 
(with base space $S^2$) are cooriantable. This property will be preserved under the above gluing
as well. 

\bekezd\label{end}
 Now we are ready to prove the following statement:
if $M$ is the manifold associated with a non--rational (connected, negative definite)
graph then it admits a transversal  coorintable foliation, hence it is not an L--space
(via 
Theorem \ref{th:intr4}). In the proof we use Theorem \ref{th:intr3}, the above 
Proposition \ref{prop:1}, and the equivalence (i)$\Leftrightarrow(iii)$ from Theorem \ref{th:intr4}.

We will run the very same induction. In the {\bf starting case}, when $m(\Gamma)=1$, 
$M(\Gamma)$ is not an L--space by Theorem \ref{th:intr3}, hence it admits the foliation by 
Theorem \ref{th:intr4}. 

In {\bf Case 1}, by inductive step both $M_w(\alpha)$ and $M_v(\phi_*(\alpha))$ admit 
foliations, whose restrictions  to $M_w$ and $M_v$ can be glued similarly as in the proof of 
Proposition \ref{prop:1} in order to get a foliation of $M(\Gamma)$. 
Next, {\bf Case 2} is a combination of these two parts,
similarly as in \ref{be:1}.

\bekezd \label{proofcor2} {\bf Proof of Corollary \ref{cor2}.}
The assumption quarantees that 
$(M_2,B_2)$ has an
analytic  realization as link of $(X,D,o)$, where $(X,o)$ is a normal surface singularity and 
$(D,o)$ a (Weil) divisor on it.
Fix such an analytic realization.
 Then, by a theorem of Stein \cite{Stein,GrRe1}, the monodromy representation of the (regular, topological) covering over $M_2\setminus B_2$, and the analytic structure of $(X,o)$ determines a (unique) normal surface singularity $(Y,o)$, and  a finite map
$(Y,o)\to (X,o)$ branched along $(D,o)$, which at the level of links  induces $M_1\to M_2$.   
Then use the fact that $(X,o)$ is rational whenever $(Y,o)$ is rational, see e.g. 
\cite[Proposition 5.13]{K}.

\bekezd \label{be:cor} {\bf Proof of Example \ref{cor:intr2}.} Let $f(x,y)$ be the equation of
the plane curve singularity with link $K$. Then the $\bZ_n$ cyclic cover $M$ is the link of the
hypersurface singularity  $\{f(x,y)=z^n\}\subset (\bC^3,o)$. Hence $M$ is an L--space if and only if $\{f(x,y)=z^n\}$ is rational. A hypersurface singularity is rational if and only if it is
of type A-D-E \cite{Five}. If $n=2$ then $\{f(x,y)=z^2\}$ is simple if and only if
$f$ itself is simple. Assume next $n>2$. Since the multiplicity of a simple surface singularity
is 2, the multiplicity of $f$ should be 2. In particular, by splitting lemma, $f$ in some local coordinates has the form $x^2+y^m$. But the Brieskorn singularity $x^2+y^m=z^n$ is rational
if and only if $1/2+1/m+1/n>1$.


\begin{thebibliography}{99}


\bibitem{Artin62} Artin, M.:
Some numerical criteria for contractibility of curves on algebraic surfaces.
Amer. J. of Math.,  84, 485-496, 1962.

\bibitem{Artin66} Artin, M.:
On isolated rational singularities of surfaces.
 Amer. J. of Math.,  88, 129-136, 1966.

\bibitem{BRW} Boyer, S., Rolfsen, D., Wiest, D.: Orderable 3--manifold groups,
Ann Inst, Fourier (Grenoble)  55 (2005), no. 1, 243--288.

\bibitem{B} Brittenham, M.: Tautly foliated 3--manifolds with no $\bR$--covered foliations,
in: Foliations: geometry and dynamics (Waesaw 2000), 213-224, World Sci. Publ., River edge, 
NJ, 2002.

\bibitem{BGW} Boyer, S., Gordon, C. McA., Watson, L.: On L--spaces and left--ordareble fundamental
groups, Math. Ann. 356 (2013), no. 4, 1213--1245.

\bibitem{BC1} Boyer, S., Clay, A.: Foliations, orders, representations,
L--spaces and graph manifolds, arXiv:1401.7726.

 \bibitem{BC2} Boyer, S., Clay, A.: Slope detections, foliations in graph manifolds, and L--spaces,
 in preparation.

\bibitem{CLW} Clay, A., Lidman, T., Watson, L.: Graph manifolds, left-orderability and amalgamation,
arXiv:1106.0486.

\bibitem{EHN} Eisenbud, D., Hirsch, U., Neumann, W.: Transverse foliations of Seifert bundles
and self homeomorphisms of the circle, Comm. Math. Helvetici 56 (1981), 638--660.

\bibitem{Gr} Grauert, H.: \"Uber Modifikationen und exzeptionelle
analytische Mengen,   Math. Ann.,  146 (1962), 331-368.


\bibitem{GrRe1} Grauert, H. and Remmert, R.: Komplexe R\"aume, Math. Annalen  136 (1958), 245-318.

\bibitem{HRRW} Hanselman, J., Rasmussen, J., Rasmussen, S. D., Watson, L.:
Taut foliations on graph manifolds, arXiv:1508.0591.

\bibitem{JN} Jankins, M., Neumann, W.D.: Rotations Numbers of Products of Circle
Momeomorphisms, Math. Ann. 271 (1985), 381--400.

\bibitem{K} Koll\'ar, J., Mori, S.: Birational geometry og algebraic varieties, Cambridge Univ. Press 134 (1998). 

\bibitem{Laufer72} Laufer, H.B.: On rational singularities,
Amer. J. of Math.,  94 (1972), 597--608.

\bibitem{LM} Lisca, P., Mati\'c, G.: Transverse contact structures on Seifert 3--manifolds, Algebraic \& Geometric Topology 4 (2004), 1125--1144. 

\bibitem{M} Mauro, M.: On lattice cohomology and left--orderability, arXiv:1308.1890.

\bibitem{Na} Naimi, R.: Foliations transverse to fibers of Seifert manifolds, Comment.
Math. Helv. 69 (1994), no.1, 155--162. 

\bibitem{Five} N\'emethi, A.: Five lectures on normal surface singularities; lectures
delivered at the Summer School in Low dimensional topology, Budapest,
Hungary 1998;  Bolyai Society Mathematical Studies  8 (1999), 269-351.



\bibitem{NOSZ} N\'emethi, A.: On the Ozsv\'ath-Szab\'o invariant of negative
definite plumbed 3-manifolds.
Geometry and Topology 9, (2005), 991--1042.


\bibitem{NLC} N\'emethi, A.: Lattice Cohomology of Normal
Surface Singularities. Publ. of RIMS, Kyoto University  4 (2) (2008),  507--543.



\bibitem{2E} N\'emethi, A.:  Two exact sequences for lattice cohomology,
Proc. of the conference  to honor H. Moscovici's 65th birthday,
Contemporary Math.  546 (2011), 249--269.

\bibitem{Ne} Neumann, W.D.: A calculus for plumbing applied to the
  topology of complex surface singularities and degenerating complex curves,
Trans. of Amer. Math. Soc. {\bf 268} (2) (1981), 299-344.

\bibitem{OSzAB} Ozsv\'ath, P.S., Szab\'o, Z.:  Absolutely graded Floer
homologies and intersection forms for four-manifolds with
boundaries.  Adv. Math. 173,  no. 2, (2003) 179--261.

\bibitem{OSzP} Ozsv\'ath, P.S., Szab\'o, Z.: On the Floer
homology of plumbed three-manifolds.
 Geometry and  Topology, 7 (2003), 185--224.


\bibitem{OSz4a} Ozsv\'ath, P.S.  and Szab\'o, Z.: Holomorphic discs and
topological invariants for closed three-manifold. Ann. of Math. 159 (2004), 1027--1158.


\bibitem{OSz4b} Ozsv\'ath, P.S.  and Szab\'o, Z.: Holomorphic discs and three-manifold
invariants:properties and applications. Ann. of Math.  159 (2004), 1159--1245.

\bibitem{OSSz}  Ozsv\'ath, P.S., Stipsicz, A.I.,  Szab\'o, Z.: A spectral sequence on
lattice cohomology, arXiv:1206.1654.



\bibitem{pinkham} Pinkham, H.:  Normal Surface Singularities with
$\setC^*$ Action,   Math. Ann. (1977), 183--193.

\bibitem{Stein} Stein, K.: Analytische Zerlegungen komplexer R\"aume, Math. Annalen  132 (1956), 63-93.

\end{thebibliography}
\end{document}